\def\@email#1#2{%
 \endgroup
 \patchcmd{\titleblock@produce}
  {\frontmatter@RRAPformat}
  {\frontmatter@RRAPformat{\produce@RRAP{*#1\href{mailto:#2}{#2}}}\frontmatter@RRAPformat}
  {}{}
}%
\newtheorem{thm}{Theorem}[section]
\newtheorem{prop}[thm]{Proposition}
\theoremstyle{definition}
\newtheorem{defn}[thm]{Definition}
\newtheorem{rmk}[thm]{Remark}
\numberwithin{equation}{section}
\newenvironment{sistema}%
{\left\lbrace\begin{array}{@{}l@{}}}%
{\end{array}\right.}
\newcommand{\Hu}{\mathcal{H}}
\newcommand{\N}{\mathbb{N}}
\newcommand{\R}{\mathbb{R}}
\newcommand{\ep}{\varepsilon}
\newcommand{\PreserveBackslash}[1]{\let\temp=\\#1\let\\=\temp}
\newcolumntype{C}[1]{>{\PreserveBackslash\centering}p{#1}}
\newcolumntype{R}[1]{>{\PreserveBackslash\raggedleft}p{#1}}
\newcolumntype{L}[1]{>{\PreserveBackslash\raggedright}p{#1}}
\newcommand{\imag}{\textbf{i}}
\begin{document}

\preprint{AIP/123-QED}

\title[On the transition between autonomous and nonautonomous systems: the case of FitzHugh-Nagumo's model]{On the transition between autonomous and nonautonomous systems: the case of FitzHugh-Nagumo's model}
\author{I.P.~Longo}
\email{iacopo.longo@imperial.ac.uk.}
 \affiliation{Imperial College London,
Department of Mathematics, 635 Huxley Building, 180 Queen’s Gate
South Kensington Campus, SW7 2AZ London, United Kingdom.}
\author{E.~Queirolo}
\affiliation{Technical University of Munich, School of Computation Information and Technology, Department of Mathematics,
Boltzmannstraße 3,
85748 Garching bei M\"unchen, Germany.}
\author{C.~Kuehn}
 \affiliation{Technical University of Munich, School of Computation Information and Technology, Department of Mathematics,
Boltzmannstraße 3,
85748 Garching bei M\"unchen, Germany.}

\date{\today}

\begin{abstract}
This work deals with a parametric linear interpolation between an autonomous  FitzHugh-Nagumo model and a nonautonomous skewed-problem with the same fundamental structure. This paradigmatic example allows to construct a family of nonautonomous dynamical systems with an attracting integral manifold and a hyperbolic repelling trajectory located within the nonautonomous set enclosed by the integral manifold. 
Upon the variation of the parameter the integral manifold collapses, the hyperbolic repelling solution disappears and a unique globally attracting hyperbolic solution arises in what could be considered yet another nonautonomous Hopf bifurcation pattern.
Interestingly, the three phenomena do not happen at the same critical value of the parameter, yielding thus an example of a nonautonomous bifurcation in two steps. 
We provide a mathematical description of the dynamical objects at play and analyze the periodically forced case via rigorously validated continuation.
\end{abstract}

\maketitle

\begin{quotation}
We investigate the transition between autonomous and nonautonomous systems using the FitzHugh-Nagumo model as a paradigmatic example. We analyze the structure of the global attractor of the system while a parameter varies, and unveil a nonautonomous bifurcation in which an attracting set collapses, a repelling solution disappears and a unique globally attracting solution arises. We provide rigorous mathematical justification of the involved dynamical objects, and validated numerics of the bifurcation via continuation techniques in the periodic case.
\end{quotation}

\section{Introduction}\label{secintro}

Multiple time scale~\cite{} and non-autonomous~\cite{} nonlinear dynamics have been separated in many instances. However, these classes share many physical features, e.g., a slowly drifting variable with constant speed directly leads to a non-autonomous system. Furthermore, a system with time-dependent forcing can be made formally autonomous by augmenting time as a phase space variable. If the forcing is very fast or very slow, a natural multiple time scale structure emerges. This leads to the natural approach that one might want to interpolate between these two system classes in the hope of relating results from their respective analysis. One goal of this work is to follow this approach and uncover its advantages but also its substantial challenges. We are going to focus on one of the simplest paradigmatic nonlinear dynamics models and consider a parametric linear interpolation between a FitzHugh-Nagumo model (including the classic van der Pol system) and a skewed nonautonomous differential problem sharing the same fundamental structure. Specifically, we consider the following nonautonomous planar system of  ordinary differential equations,
\begin{equation}\label{eq:problem}
\begin{sistema}
x' = (1-\delta)y+\delta v(t)-\frac{x^3}{3} +x,\\
y' =\ep(a-x-b y),\\
v\in L^\infty(\R,\R)
\end{sistema}
\end{equation}
with $0\le \delta\le 1,\, \ep,b\ge0,\, a\in\R$.
The first immediate observations concern the family of dynamical systems~\eqref{eq:problem} as $\delta$ varies in $[0,1]$. If $\delta=0$,~\eqref{eq:problem} is the classic autonomous FitzHugh-Nagumo model with constant forcing $a$,
\begin{equation}\label{eq:FHN}
\begin{sistema}
x' = y-\frac{x^3}{3} +x,\\
y' =\ep(a-x-by).\\
\end{sistema}
\end{equation}
Note that if $b=0$ then \eqref{eq:FHN} reduces to the van der Pol model~\cite{van1927vii}, which is an example of an oscillator with nonlinear damping, energy being dissipated at large amplitudes and generated at low  amplitudes. 
Typically, this feature guarantees the existence of a limit cycle in a suitable region of the parameter space; that is, sustained oscillations around a state at which energy generation and dissipation balance.
Van der Pol's equation is a classic topic of nonlinear dynamics and it has been extensively studied (see for example Ref.~\onlinecite{kuehn2015multiple,guckenheimer2013nonlinear} and references therein). 
The FitzHugh-Nagumo model \cite{fitzhugh1955mathematical} generalizes the van der Pol model, enabling the appearance of a larger variety of oscillations and modelling several processes in biophysics in a bit more detail~\cite{}, such as excitable neuron dynamics. 
Unsurprisingly, the bifurcation analysis of the FitzHugh-Nagumo model is even richer and more involved. We point the interested reader towards the detailed presentation in Ref.~\onlinecite{rocsoreanu2012fitzhugh} for further information. 
Interestingly, the FitzHugh-Nagumo model is known to showcase dynamical complexity when subjected to nonautonomous forcing\cite{kuehn2017quenched}. More in general, the appearance of almost automorphic dynamics and strange nonchaotic attractors in almost periodically forced, damped nonlinear oscillators is a classic field of study \cite{huang2009almost, yi2004almost,romeiras1987strange,pikovsky2006strange, jager2006denjoy,jager2006towards}.

Important for our work is the existence of an attractive limit cycle for certain values of the parameters of the autonomous model.  This is guaranteed by the occurrence of a supercritical Hopf bifurcation where a stable equilibrium becomes unstable and the attractive periodic orbit appears. 

For $\delta=1$,~\eqref{eq:problem} is the skewed nonautonomous problem
\begin{equation}\label{eq:NA_limit}
\begin{sistema}
x' = v(t)-\frac{x^3}{3} +x,\\
y' =\ep(a-x-by).\\
\end{sistema}
\end{equation}
By the term ``skewed" we mean that~\eqref{eq:NA_limit} is composed of an uncoupled cubic nonautonomous equation and by a further scalar linear problem in $y$ that can be easily integrated with the variation of constants formula once the solution of the first equation is known. The dynamical scenarios of the first equation in~\eqref{eq:NA_limit} have been studied by Tineo \cite{tineo2003result,tineo2003result2}, and more recently also by Due\~nas et al.~\cite{duenas2022generalized} under far more general assumptions. 
In particular, for any $v$ bounded and uniformly continuous, there are at least one and at most three hyperbolic solutions of 
\begin{equation*}
\dot x = v(t)-\frac{x^3}{3} +x, 
\end{equation*}
i.e., trajectories that are hyperbolic in the non-autonomous sense as explained in Section \ref{subsec:attractors}. Additionally, if three hyperbolic solutions exist, then the upper and the lower ones are attractive and the one in between is repelling, whereas if only one hyperbolic solution exists, then it is always attractive. 
We will work under the assumption that this equation has indeed only one hyperbolic solution and therefore it is attracting. 
Additionally, if $v$ is assumed periodic, then we show that an attractive periodic solution exists for~\eqref{eq:NA_limit}.

If $0<\delta<1$, the system~\eqref{eq:problem} can be understood \mbox{(bio-)physically} as a FitzHugh-Nagumo type of model
with time-dependent injected current. More theoretically,~\eqref{eq:problem} corresponds to an interpolation between two different FitzHugh-Nagumo models via the parameter $\delta$. The two limits $\delta=0$ and $\delta=1$ correspond to a classical multiple time scale and to a non-autonomous systems respectively.

The variation of the parameter $\delta$ connects dynamical systems living on different phase spaces: the autonomous problem for $\delta=0$ has phase-space $\R^2$ whereas the nonautonomous problems for $\delta>0$ are defined on the extended phase-space $\R\times\R^2$.
In fact, for $\delta>0$, standard techniques from topological dynamics allow to construct an autonomous flow on an extended phase space, seen as a bundle of a base and fibers, which is dynamically much more interesting. The base corresponds to a functional metric space on which the time shift (Bebutov flow) produces a well-defined dynamical system. Under certain assumptions on $v$, the base of this bundle is in fact compact, unlike the real line of the trivial augmented phase space $\R\times\R^2$. The fiber, instead, is the original Euclidean space now parametrized over the elements of the base. The resulting flow is called a skew-product flow. A brief summary of these facts is presented in \ref{subsec:skew-prod}.
As an example, if $v$ is periodic, a standard trick allows to reduce the nonautonomous problems to autonomous ones on the extended phase space $\mathbb{S}^1\times\R^2$---the variable of time can be considered modulo the period of $v(t)$ which is fixed a priori. 
In order to provide a unified framework, also the phase space of \eqref{eq:FHN} is accordingly augmented. 
This fact, however, has important consequences on the interpretation of the dynamics of \eqref{eq:FHN}. For example, in the simplest nontrivial case with $v$ nonconstant periodic, an equilibrium point and a periodic orbit of the autonomous FitzHugh-Nagumo (or the van der Pol) model will respectively correspond to a periodic orbit and to the boundary of a two-torus in the augmented phase-space $\mathbb{S}^1\times\R^2$.

Consequently, even in the case where $y$ is substituted by the $y$-coordinate of a twin independent Fitzhugh-Nagumo model, 
\[
\begin{split}
&\begin{sistema}
x' = v(t)-\frac{x^3}{3} +x\\
y' =\ep(a-x-by),\\
\end{sistema}\\
&\text{where } \big(u(t),v(t)\big)\text{ solves}\quad
\begin{sistema}
u' = v-\frac{u^3}{3} +u\\
v' =\ep(a-u-bv),
\end{sistema}\\
\end{split}
\]
the dynamical scenarios of \eqref{eq:FHN} and \eqref{eq:NA_limit} will be fundamentally different. The global attractor of the first one being a two-torus, while for the second one a single attracting periodic orbit. 

We investigate the change of the global attractor and specifically the occurrence of bifurcations upon the variation of the parameter $\delta\in[0,1]$. In the periodic case, this also corresponds to the breaking of the torus. The description of the global attractor is carried out by combining analytical techniques and rigorous validated numerics. The former allow a precise account of the fine structure when $\delta$ assumes values close to $0$ and $1$. The latter shed light on the bifurcation. 
The pattern we encounter can be considered yet another version of nonautonomous Hopf bifurcation\cite{braaksma1987quasi,braaksma1990toward,johnson1994hopf,Franca2016nonautonomous,anagnostopoulou2015model,franca2019non,nunez2019li} in the sense that the global attractor with positive measure in the fiber existing close to $\delta=0$ discontinuously collapses giving birth to a unique attracting hyperbolic solution. 
In the process, a hyperbolic completely repelling solution, contained in the original global attractor, also disappears. Unlike an autonomous Hopf bifurcation, the diameter of the global attractor does not shrink close to the bifurcation point but it abruptly vanishes, something not uncommon in the nonautonomous setting\cite{nunez2019li}. Ref.~\onlinecite{Franca2016nonautonomous} contains, to the best of our knowledge, the most similar setup, in that their object of study is a nonautonomous perturbation of the normal form of the autonomous Hopf bifurcation. We, however, do not assume to work in a range of parameters close to an autonomous Hopf bifurcation but rather take the existence of a periodic limit cycle for the autonomous problem as a starting point and vary a parameter which gradually leads to a nonautonomous problem which is not a perturbation of the autonomous one.

The work is organized as follows. In Section \ref{sec:topological dynamics}, we present some background notions on nonautonomous dynamical systems such as the skew-product formalism and the process formalism, the notions of attractors for nonautonomous systems, hyperbolic solutions, and integral manifolds.  This section might seem a little technical. However, as we aim to link non-autonomous and autonomous multiple time scale systems, we first have to carefully  set up the background for both areas. 

Section \ref{sec:global-attractor} deals with the description of the global attractor of \eqref{eq:problem} whose existence  for all $\delta\in[0,1]$ is proved in \ref{subsec:existence-glob-attr}; we distinguish three cases: in  \ref{subsec:global-attractor-d=0} we deal with the case $\delta=0$ employing classical techniques from multiple time scale dynamics\cite{krupa2001relaxation} to recall the existence of a strongly attracting periodic orbit which, in turn, induces a trivial attracting integral manifold in the augmented phase space. Its persistence under small perturbations is used to inherit the existence of a nontrivial integral manifold for strictly positive values of $\delta$ close to zero. 
In \ref{subsec:global-attract-d=1} we apply recent results on the characterization of hyperbolic solutions for scalar d-concave ordinary differential equations\cite{duenas2022generalized} to prove the existence of a unique attracting hyperbolic solution for \eqref{eq:NA_limit}. Its persistence under small perturbations is used to guarantee the same dynamical scenario for values of $\delta$ close to one. In \ref{subsec:global-attractor-int-d}, we give a rigorous description of the finite-time behaviour of the attractor for intermediate values of $\delta$ using recent results in singularly perturbed nonautonomous systems\cite{longo2024tracking}.

Section \ref{sec:nonauton-bif} tackles the analysis of the nonautonomous bifurcation leading to the collapse of the integral manifold, the disappearance of the repelling hyperbolic solution and the appearance of the globally attracting hyperbolic solution. After providing numerical evidence of the two-step bifurcation for the periodically and quasi-periodically forced cases, we focus on a four dimensional autonomus system where the forcing is generated by a twined FitzHugh-Nagumo system whose $y$-component is fed into the two-dimensional parametric problem as $v$. In this setup we are able to employ a numerically validated approach to carry out the continuation of the hyperbolic periodic trajectories of the two-dimensional forced problem and study their stability by approximating the Lyapunov spectra for the variational equations along them.


\section{Basic facts on flows, attractors, hyperbolic solutions and integral manifolds}\label{sec:topological dynamics}

\subsection{Skew-product flows and processes}\label{subsec:skew-prod} Let $v:\R\to\R$ be a bounded and uniformly continuous function. A standard technique allows to construct an autonomous flow for \eqref{eq:problem} on a compact metric space. 
Let $\mathcal{H}(v)$ denote the closure with respect to the compact-open topology of the set of time-translations of $v$ i.e., the set $\mathrm{cls}\{v_t:\R\to\R\mid t\in\R\}$, where $v_t(s)=v(t+s)$. 
It is well-known that $\mathcal{H} (v)$ is a compact metric space and the shift map $\sigma:\R\times \mathcal{H}(v)\to\mathcal{H}(v)$, $(t,v)\mapsto \sigma_t(v)=v_t$ defines a continuous flow on $\mathcal{H}(v)$ called the Bebutov flow. 
This flow is also minimal  provided that  $v$ has certain recurrent behaviour in time, such as periodicity, almost periodicity, or other weaker properties of recurrence. If $v$ is almost periodic, then the flow on the hull is minimal and almost periodic, and thus uniquely ergodic. Let us now consider $w\in\mathcal{H}(v)$, $X_0=(x_0,y_0)\in\R^2$, and denote by $X(t,w,X_0)$ the maximal solution of the Cauchy problem 
\begin{equation}\label{eq:SkewProd-CP}
\begin{sistema}
x' = (1-\delta)y+\delta w(t)-\frac{x^3}{3} +x\\
y' =\ep(a-x-b y)\\
x(0)=x_0,\ y(0)=y_0
\end{sistema}
\end{equation}
with $0\le \delta\le 1,\, \ep,b\ge0,\, a\in\R.$
The map 
\[
\begin{split}
 \Phi:\R\times\mathcal{H}(v)\times \R^2&\to\mathcal{H}(v)\times \R^2\\
 (t,w,X_0)&\mapsto\Phi(t,w,X_0)=(w_t,X(t,w,X_0))   
\end{split}
\]
is a continuous skew-product (autonomous) flow on $\mathcal{H}(v)\times \R^2$ on the compact metric base $\mathcal{H}(v)$. In the special case that $v$ is periodic, the identification $\mathcal{H}(v)=\mathbb{S}^1$ holds.
If $v$ is quasi-periodic, $\mathcal{H}(v)=\mathbb{T}^n$ for some $n\in\N$, but already for $v$ almost periodic, $\Hu(v)$ is not a manifold in general.
We note that, in fact, the construction of a continuous skew-product flow for \eqref{eq:problem} holds under substantially weaker assumptions akin to local integrability (see Ref.~\onlinecite{longo2018topologies} and references therein). 
For example, \eqref{eq:problem} induces a continuous skew-product flow also if $v$ is essentially bounded, i.e.~$v\in L^\infty(\R,\R)$ where an integral topology is used in place of the compact-open topology (see Theorem 5.6(i) in Ref.~\onlinecite{longo2017topologies}). In this case, a solution is intended in the sense of Carathéodory, that is, a locally absolutely continuous function that solves the associated integral problem. Although the compactness of the base does not hold in general under such mild assumptions of regularity, it does hold in the specific case of \eqref{eq:problem} (for general characterizations of compacteness with respect to strong and weak integral topologies see Ref.~\onlinecite{artstein1977topological,george1968compact}).

\begin{prop}\label{prop:SKEW-PROD-COMPACT}
    Let $v\in L^\infty(\R,\R)$. Then, $H(v)$ is a compact metric space and \eqref{eq:problem} induces a continuous skew-product flow on a compact metric base
    \[\begin{split}
    \Phi:\R\times\mathcal{H}(v)\times \R^2&\to\mathcal{H}(v)\times \R^2,\\ (t,w,X_0)&\mapsto\Phi(t,w,X_0)=(w_t,X(t,w,X_0)).
    \end{split}
    \]
\end{prop}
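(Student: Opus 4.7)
The plan is to reduce the statement to Theorem 5.6(i) of Ref.~\onlinecite{longo2017topologies}, already referenced in the excerpt, by verifying its hypotheses for the right-hand side of~\eqref{eq:problem}. I would split the argument into three stages: (i) choosing a topology on $L^\infty(\R,\R)$ in which translation acts continuously and the hull is compact; (ii) establishing compactness and metrizability of $\mathcal{H}(v)$; (iii) deriving continuity of the skew-product flow.

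First, since $v$ is only essentially bounded, pointwise evaluation is not defined and the compact-open topology is unavailable. Following Refs.~\onlinecite{longo2018topologies,longo2017topologies}, I would equip $L^\infty(\R,\R)$ with one of the weak integral topologies, for example that generated by the seminorms $w\mapsto \bigl|\int_\R \varphi(s)w(s)\,ds\bigr|$ with $\varphi\in L^1(\R,\R)$ of compact support. This topology is Hausdorff, metrizable on norm-bounded subsets, and translation-invariant.

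Second, observe that $\|v_t\|_{L^\infty}=\|v\|_{L^\infty}$ for every $t\in\R$, so the full orbit $\{v_t:t\in\R\}$ is contained in the closed ball $B_{\|v\|_\infty}\subset L^\infty(\R,\R)$. By Banach--Alaoglu together with separability of the predual (or by the integral-topology compactness results in Refs.~\onlinecite{artstein1977topological,george1968compact}), this ball is compact and metrizable in the chosen topology. Hence $\mathcal{H}(v)$, as a closed subset, inherits both properties, and translation-invariance of the seminorms yields continuity of the Bebutov shift $\sigma\colon\R\times\mathcal{H}(v)\to\mathcal{H}(v)$.

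Third, the right-hand side of~\eqref{eq:problem} decomposes as $F(X)+\delta\,(w(t),0)^\top$, where $F$ is a locally Lipschitz polynomial vector field on $\R^2$ and the dependence on the driver is affine. This places the Cauchy problem~\eqref{eq:SkewProd-CP} squarely inside the Carathéodory framework of Ref.~\onlinecite{longo2017topologies}. Convergence $w_n\to w$ in the chosen integral topology then implies $\int_0^t w_n(s)\,ds\to\int_0^t w(s)\,ds$ uniformly on compact subintervals, and a Gronwall argument converts this into continuous dependence of $X(t,w,X_0)$ on $(t,w,X_0)$. Global existence on $\R$ follows from the standard a priori bound for FitzHugh--Nagumo-type systems, obtained from a quadratic Lyapunov function whose Lie derivative is dominated by the cubic dissipative term. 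The main obstacle, which the cited abstract theorem encapsulates, is precisely the passage from $L^1_{\mathrm{loc}}$-type convergence of drivers to uniform-on-compacta convergence of Carathéodory solutions; once this is granted, joint continuity of $\Phi$ in $(t,w,X_0)$ is routine.
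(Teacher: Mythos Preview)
Your proposal is correct and follows essentially the same route as the paper: reduce to the abstract Carath\'eodory framework of Refs.~\onlinecite{longo2017topologies,longo2018topologies,artstein1977topological} by checking that the right-hand side satisfies the relevant regularity hypotheses, then invoke the cited compactness and continuity theorems. The paper's proof differs only in emphasis: rather than arguing structurally via the decomposition $F(X)+\delta(w(t),0)^\top$ and Banach--Alaoglu, it verifies directly that $f(t,X)$ has essentially bounded $m$-bounds and $l$-bounds (the latter via the mean-value estimate $|f(t,X_1)-f(t,X_2)|\le |X_1-X_2|\int_0^1|J_Xf(t,X_1+\theta(X_2-X_1))|\,d\theta$), and then cites Theorem~4.1 of Ref.~\onlinecite{artstein1977topological} for compactness of $\mathcal H(v)$ and Theorems~3.5(i) and~2.33 of Ref.~\onlinecite{longo2018topologies} for continuity of $\Phi$. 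Your Gronwall sketch is exactly what those theorems package. One small point: your global-existence step via a quadratic Lyapunov function is not needed here---the paper states the proposition for a local flow and defers forward completeness to Theorem~\ref{prop:globAttr}, where it is obtained from the polar-coordinate dissipativity estimate.
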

\begin{proof}
    Let $f(t,X)$ denote the right-hand side of \eqref{eq:problem} with $X=(x,y)$, and let $J_Xf(t,X)$ denote the Jacobian of $f$ with respect to the variable $X$. Additionally, let $\|v\|_\infty=V$. For any $r>0$, there are positive constants $m=m(V,r)$ and $l=l(r)$ such that for almost every $t\in\R$, $|f(t,X)|\le m$ and 
    \[\begin{split}
    &|f(t,X_1)-f(t,X_2)|\\
    &\le  |X_1-X_2|
    \int_0^1\left|
    J_Xf\big(t,X_1+\theta(X_2-X_1)\big)
    \right|
    d\theta \le l\,|X_1-X_2|,
     \end{split}
    \]
    for every $X,X_1,X_2\in B_r(0)$.
    Therefore, in the terminology of Carath\'eodory functions, $f$ has $L^1_{loc}$-uniformly integrable $m$-bounds and $L^1_{loc}$-bounded $l$-bounds---in fact, essentially bounded $m$-bounds and $l$-bounds. Therefore, Theorem 4.1 in Ref.~\onlinecite{artstein1977topological} guarantees that $H(v)$ is a compact metric space, whereas Theorem 3.5(i) and Theorem 2.33 in Ref.~\onlinecite{longo2018topologies} show that the induced skew-product flow is continuous.
\end{proof}


Instead of considering a flow on an extended phase space, sometimes it is useful to look at a nonautonomous problem in terms of an evolution operator which now depends on two parameters, the initial and the final time. Such evolution operator goes under the name of process or cocycle.
\begin{defn}
Given a metric space $(E,d)$ a process is a family of continuous maps $\{S(t,s)\mid t\ge s\}\subset \mathcal{C}(E)$ satisfying
\begin{itemize}[leftmargin=*]
\item $S(t,t)\,x=x$ for every $t\in\R$ and $x\in E$.
\item $S(t,s)=S(t,r)\,S(r,s)$, for every $t\ge r \ge s$.
\item $(t,s,x)\mapsto S(t,s)\,x$ is continuous for every $t\ge s$ and $x\in E$.
\end{itemize}
\end{defn}
\begin{rmk}
 If $\dot X=f(t,X)$ is such that for any $X_0\in\R^N$ and any $t_0\in\R$, there is a unique solution $X(\cdot, t_0,X_0)$ and it is defined on $[t_0,\infty)$, then a process is induced by
\begin{equation}\label{eq:process}
S_f(t+t_0,t_0)\,X_0=X(t+t_0,t_0,X_0),
\end{equation}
where $t\ge 0$ and $t_0\in \R$.  
\end{rmk}
\subsection{Attractors for nonautonomous differential equations}\label{subsec:attractors}
The double dependence on time of nonautonomous problems gives rise to a new type of attractivity---pullback attractivity---which is independent from the classic notion of forward attractivity. The two notions are indistinguishable for autonomous systems. 
Next we recall the definitions of global and pullback attractor for skew-product flows as well as the definition of pullback attractor for a process on $\R^2$. For an extensive treatment on the matter, we point the reader to Ref.~\onlinecite{kloeden2011nonautonomous,carvalho2012attractors} and the references therein.

\begin{defn}[Pullback attractor for a process]
A family of subsets $\mathcal A(\cdot)=\{\mathcal A(t) \mid t \in \R\}$ of the phase space $\R^2$  is said to be a \emph{pullback attractor for the process $S(\cdot,\cdot)$} if
\begin{itemize}
\item[(i)] $\mathcal A(t)$ is compact for each $t\in\R$;
\item[(ii)] $\mathcal A(\cdot)$ is invariant, that is, $S(t, s)\,\mathcal A(s) = \mathcal A(t)$ for all $t\ge  s$;
\item[(iii)] for each $t\in \R$, $\mathcal A(t)$ pullback attracts bounded sets at time t, i.e. for any bounded set $B\subset \R^2$ one has
\[
\lim_{s\to-\infty} d(S(t, s)\,B,\mathcal A(t)) = 0,
\] 
where $d(A,B)$ is the \emph{Hausdorff semi-distance} between two nonempty sets $A,B\subset \R^2$  i.e. $d(A,B) := \sup_{x\in A} \inf_{y\in B}d(x,y)$.
\item[(iv)] $\mathcal A$ is the minimal family of closed sets with property (iii).
\end{itemize}
The pullback attractor is said to be  \emph{bounded} if  $\bigcup_{t\in\R}\mathcal A(t)$ is bounded.
\end{defn}

\begin{defn}[Pullback and global attractors for a skew product flow]
Assume that for any $w\in\mathcal{H}(v)$ and any $X_0\in \R^2$, the solution $X(\cdot,w,X_0)$ of \eqref{eq:SkewProd-CP} is defined on $[0,\infty)$, i.e. the induced  skew-product flow  is defined on $\R^+\!\!\times\mathcal{H}(v)\times \R^2$. (This fact will be proved true in Theorem \ref{prop:globAttr}).\smallskip
\begin{itemize}[leftmargin=12pt]
\item A family $\widehat A=\{ A_w\subset\R^2\mid w\in\mathcal{H}(v)\}$ of nonempty, compact sets of $\R^2$ is said to be a \emph{pullback attractor for the skew-product flow} if it is invariant, i.e.
\begin{equation}\label{eq:13.06-13:32}
X(t,w,A_w)=A_{w_t} \quad \text{ for each } t\ge 0 \;\text{ and }\; w\in\mathcal{H}(v)\,,
\end{equation}
and, for every nonempty bounded set $D$ of $\R^2$ and every $w\in\mathcal{H}(v)$ one has
\begin{equation}\label{eq:13.06-12:33}
\lim_{t\to\infty} d(x(t,w_{-t},D), A_w)=0\,,
\end{equation}
where $d(A,B)$ denotes the \emph{Hausdorff semi-distance} of two nonempty sets $A$, $B$ of~$\R^2$.

A pullback attractor for the skew-product flow is said to be bounded if
\begin{equation*}
\bigcup_{w\in\mathcal{H}(v)} A_w\quad\text{is bounded.}
\end{equation*}
\item A compact set $\mathcal A$ of $\mathcal{H}(v)\times \R^2$ is said to be a \emph{global attractor for the skew-product flow} if it is the maximal nonempty compact subset of $\mathcal{H}(v)\times \R^2$ which is $\Phi$-invariant, i.e.
\begin{equation*}
 \Phi(t,\mathcal A)=\mathcal A \quad \text{ for each }\; t\ge 0\,,
\end{equation*}
and attracts all compact subsets $\mathcal D$ of $\mathcal{H}(v)\times \R^2$, i.e.
\[\lim_{t\to\infty} d(\Phi(t,\mathcal D),\mathcal A)=0\,,\]
where now $d(\mathcal B,\mathcal C)$ denotes the \emph{Hausdorff semi-distance} of two nonempty sets $\mathcal B$, $\mathcal C$ of $\mathcal{H}(v)\times \R^2$.
\end{itemize}
\end{defn}
\begin{rmk}\label{rmk:pullbackFROMskewTOPROCESS}
We wish to make explicit the relation between the family of sets $A_w\subset\R^2$, with $w\in\mathcal{H}(v)$, in the previous definition of the pullback attractor of the skew-product flow $\widehat A=\{ A_w\subset\R^2\mid w\in\mathcal{H}(v)\}$ and the pullback attractors of the processes induced by each $w\in\mathcal{H}(v)$.
Recalling that for  every $w\in\mathcal{H}(v)$ one has
\begin{equation}\label{eq:13.06-12:51}
S_w(s+r,r)\,x_0=x(s,w_r,x_0)\,,
\end{equation}
then, for every nonempty bounded set $D$ of $\R^2$, \eqref{eq:13.06-12:33} becomes
\begin{equation}\label{eq:13.06-12:46}
\lim_{t\to\infty} d(S_w(0,-t)\,D, A_w)=0\,,
\end{equation}
which implies that, for the given process $S_w(\cdot,\cdot)$, $ A_w$ pullback attracts bounded sets at time $0$. In turn, for every $\tau\in\R$ one can write~\eqref{eq:13.06-12:46} for $w_\tau$ and $A_{w_\tau}$ instead of $w$ and $A_w$, respectively, i.e.
\begin{equation}\label{eq:13.06-12:54}
\lim_{t\to\infty} d(S_{w_\tau}(0,-t)\,D, A_{w_\tau})=0\,.
\end{equation}
Nevertheless, using \eqref{eq:13.06-12:51} twice, we also have
\begin{equation*}
S_{w_\tau}(0,-t)\,D=x(t,w_{\tau-t},D)=S_w(\tau,\tau-t)\,D\,,
\end{equation*}
and thus \eqref{eq:13.06-12:54} can be written as
\begin{equation}\label{eq:13.06-13:35}
\lim_{t\to\infty} d(S_w(\tau,\tau-t)\,D, A_{w_\tau})=0\,,
\end{equation}
which implies that, for the given process $S_w(\cdot,\cdot)$, $ A_{w_\tau}$ pullback attracts bounded sets at time $\tau$. Therefore, as a consequence of the invariance contained in \eqref{eq:13.06-13:32} and of the fact that $A_w$ is taken compact for any $w\in\mathcal{H}(v)$, we deduce that the process $S_w(\cdot,\cdot)$ has a pullback attractor. In particular if
\begin{equation*}
\mathcal{A}_w=\left\{A(\tau)=A_{w_\tau}\mid\tau\in\R\right\}
\end{equation*}
is the minimal family of closed sets satisfying \eqref{eq:13.06-13:35}, then $\mathcal{A}_w$ is the pullback attractor for the process $S_w(\cdot,\cdot)$. On the other hand if for any $w\in\mathcal{H}(v)$, the induced process   $S_w(\cdot,\cdot)$ has a pullback attractor and  $A_w$ denotes the section at time $0$ of the pullback attractor of $S_w(\cdot,\cdot)$, then one has that
\begin{equation*}
\widehat A=\left\{ A_w\mid w\in\mathcal{H}(v)\right\}
\end{equation*}
is a pullback attractor for the skew product flow on $\mathcal{H}(v)\times \R^2$.\par\smallskip

\end{rmk}

A special case of nonautonomous (local) attractors is given by hyperbolic attracting solutions, i.e.~globally defined solutions which uniformly pullback and forward attract nearby solutions at an exponential rate. Let us be more precise. Let $L:\R\to\R^2$ be a continuous function, and recall that a linear homogeneous system \begin{equation}\label{eq:linearNAODE}
\dot z=L(t)z,    
\end{equation} 
is uniformly asymptotically stable if and only if the trivial solution $x\equiv0$ is exponentially stable (see for example Theorem~4.4.2 in Ref.~\onlinecite{adrianova1995introduction}), i.e.~there are constants $K>0$ and $\gamma>0$ such that,
\begin{equation}\label{eq:exp-stable-linear}
\|\Psi(t,s)\|\le Ke^{-\gamma(t-s)},\quad\text{for all }t\ge s
\end{equation}
where $\Psi(t,s)$ is the principal matrix solution of \eqref{eq:linearNAODE} at $s\in\R$. In this case, one also says that \eqref{eq:linearNAODE} is exponentially stable. On the other hand, \eqref{eq:linearNAODE} is asymptotically uniformly stable in backward time if~there are constants $K>0$ and $\gamma>0$ such that,
\begin{equation}\label{eq:exp-compl-unstable-linear}
\|\Psi(t,s)\|\le Ke^{\gamma(t-s)},\quad\text{for all }s\ge t
\end{equation}

Given a nonlinear problem $\dot x=f(t,x)$ (possibly of Carathéodory type), where $f$ is continuously differentiable in $x$ for almost every $t\in\R$, a globally defined solution $\widetilde x$ is said to be {\em hyperbolic attracting\/} (resp.~\emph{hyperbolic completely repelling}) if the corresponding variational
equation $\dot z= J_f\big(t,\widetilde x(t)\big)z$ satisfies an inequality of the type \eqref{eq:exp-stable-linear} (resp.~\eqref{eq:exp-compl-unstable-linear}). Note that hyperbolic attracting solutions are both (locally) pullback and forward attracting. Hyperbolic saddle solutions are also possible and this requires a generalization of the previous formulas through the concept of exponential dichotomy. This is not required for this work and therefore we point the interested reader for example to Ref.~\onlinecite{kloeden2011nonautonomous,potzsche2011nonautonomous} for the details and further references.
An important feature of hyperbolic solutions is that they persist under small perturbations\cite{potzsche2011nonautonomous}.

\subsection{On Floquet's theory and normally hyperbolic integral manifolds}\label{sec:floquet_hyperbolic}
In this section, we shall briefly relate  the Floquet exponents of a periodic solution $\gamma$ of a planar system to the normal hyperbolicity of the integral manifold associated to $\gamma$ when the planar system is embedded in the extended phase space  $\R\times\R^2$.

Let us start by recalling Floquet's Theorem in the planar case. Every fundamental matrix solution  $Y:\R\to\R^{2\times 2}$ of the $T$-periodic linear problem $\dot y=L(t) y$, has the form $Y(t)=P(t)e^{tB}$, where
$P(t)$ and $B$ are $2\times 2$ matrices, $P(t+T)=P(t)$ for all $t\in\R$ and $B$ is constant. 

A monodromy matrix of  $\dot y=L(t) y$ is a nonsingular matrix $M$ associated with a
fundamental matrix solution $Y(t)$ of $\dot y=L(t) y$ through the relation $Y(t+T)=Y(t)M$. The eigenvalues $\mu$ of a monodromy matrix are called the characteristic (or Floquet) multipliers of $\dot y=L(t) y$ and any $\lambda$ such that $\mu=e^{\lambda T}$ is called a characteristic (or Floquet) exponent of $\dot y=L(t) y$. Note that the characteristic exponents are not uniquely
defined, albeit their real parts are, and they do not depend on the chosen monodromy matrix~\cite{hale1969ordinary}. 

We shall now turn our attention to non-linear autonomous planar problems subjected a to a small time-dependent perturbation. Consider the system of equations 
\begin{equation}\label{eq:AUT-PLANAR}
\begin{split}
  \dot x&=f(x,y)+\delta f_0(t,x,y)\\
  \dot y&=g(x,y)+\delta g_0(t,x,y)   
\end{split} 
\end{equation}
where $f,f_0,g,g_0:\R^2\to\R$ are smooth. At first, consider $\delta=0$ and assume that a non-constant T-periodic solution $\gamma:\R\to\R^2$ exists. The Floquet exponents of the variational equation along $\gamma$ are\cite{hale1969ordinary} 
\[
\lambda_1=1\quad \text{and}\quad \lambda_2=\exp\int_0^T\left(\frac{\partial f}{\partial x}\big(\gamma(t)\big)+\frac{\partial g}{\partial y}\big(\gamma(t)\big)\right)\,dt,
\]
and thus $\gamma$ is asymptotically stable if  $\lambda_2<0.$
In such a case, a constant $c>0$ exists such that for any $(x_0,y_0)$ with $d\big((x_0,y_0),\gamma(\R)\big)<c$ there is $\tau=\tau(x_0,y_0)$ such that the solution $\phi_0:\R\to\R^2$ with initial condition  $(x_0,y_0)$ satisfies $|\phi_0(t)-\gamma(t-\tau)|\le ce^{\lambda_2 t}$. 

We shall now look at \eqref{eq:AUT-PLANAR} for $\delta=0$ in the augmented phase space. In view of the above, the cylinder in $\R\times\R^2$,
\[
S_0=\{(t,x,y)\mid (x,y)=\gamma(\theta),\, \theta\in[0,T],\,t\in\R\} 
\]
and the torus in $\mathbb{S}^1\times\R^2$,
\[
T_0=\{(t,x,y)\mid (x,y)=\gamma(\theta),\, \theta,t \text{ mod } T\in[0,T)\},
\]
with $\tau>0$, are invariant  for the flow induced by \eqref{eq:AUT-PLANAR} with $\delta=0$, and asymptotically stable. In their respective spaces, they are examples of integral manifolds\cite{hale1969ordinary}, i.e., invariant sets for the flow induced by \eqref{eq:AUT-PLANAR} with a special structure. 
Considering a change of coordinates $(x,y)=\gamma(\theta)+Z(\theta)\rho$ (a moving orthonormal system along $\gamma$ as described in Theorem VI.1.1 in \onlinecite{hale1969ordinary}), the solutions of the original planar system can be described in a neighborhood of $S_0$ (or~$T_0$) by
\[
\begin{split}
\dot \theta&=1+\Theta(\theta,\rho)\\  
\dot \rho&=\lambda_2\rho+R(\theta,\rho),
\end{split}
\]
where $\theta$ is a radial coordinate, whereas $\rho$ represents the normal deviation from $\gamma(\theta)$, $\Theta(\theta,\rho)=O(|\rho|)$ and $R(\theta,\rho)=O(|\rho|^2)$ as $\rho\to0$, and $\Theta$ and $R$ are $T$-periodic in $\theta$. Details on this construction can be found in Section VII.1 of Ref.~\onlinecite{hale1969ordinary}.

Importantly, such an integral manifold is robust against perturbations: let $\delta\neq0$ in \eqref{eq:AUT-PLANAR},
where $f_0$ and $g_0$ are bounded in a neighborhood of $S_0$ uniformly in $t$, and $\delta$ is sufficiently small. Then, the above change of coordinates applied to the perturbed problem leads to
\[
\begin{split}
\dot \theta&=1+\Theta(\theta,\rho)+\delta\Theta_0(t,\theta,\rho)\\  
\dot \rho&=\lambda_2\rho+R(\theta,\rho)+\delta R_0(t,\theta,\rho),
\end{split}
\]
where $\Theta_0(t,\theta,0)$ and $R_0(t,\theta,0)$ are bounded.
Thus, Corollary VII.2.1 in~\onlinecite{hale1969ordinary} ensures the existence of a continuous function $r:\R^2\times [0,\delta_0]\to\R^+ $, $(t,\theta,\delta)\mapsto r(t,\theta,\delta)$, for some $\delta_0>0$, bounded and Lipschitz continuous in $\theta$, so that 
\[
S_\delta=\{(t,\theta,r(t,\theta,\delta))\mid  \theta\in[0,T],\,t\in\R\} 
\]
is an integral manifold of \eqref{eq:AUT-PLANAR} in $\R\times\R^2$.
Moreover, if the functions $f_0,g_0$ are $\tau$-periodic or almost periodic in $t$, the same holds for  $r(\cdot,\theta,\delta)$.

\section{Description of the global attractor}
\label{sec:global-attractor}

\subsection{Existence of a global attractor for all $\delta\in[0,1]$}\label{subsec:existence-glob-attr}
Our first result proves the dissipative nature of~\eqref{eq:problem} which gives rise to a pullback and a global attractor for the system.

\begin{thm}\label{prop:globAttr} Let $v:\R\to\R$ be essentially bounded. Then, the following statements are true for~\eqref{eq:problem} for any $\delta\in[0,1]$:
\begin{itemize}
\item[\rm(i)]  there exists a unique bounded pullback attractor $\widehat A^\delta=\{ A^\delta_w\subset\R^2\!\mid w\in \mathcal{H} (v)\}$ for the skew-product flow and it is defined by
    \[A^\delta_w=\bigcap_{\tau\ge 0}\; \overline{\bigcup_{t\ge \tau}x(t,w_{-t}, B_{\overline{r}};\delta)}\quad \text{for each }\; w\in\mathcal{H} (v), \]
    where $B_{\overline r}$ is the closed ball in $\R^2$ of radius $\overline r$ centered at the origin;
\item[\rm(ii)] there is a global attractor of the skew-product flow 
    \[ \mathcal A^\delta=\bigcap_{\tau\ge 0}\; \overline{\bigcup_{t\ge \tau} \Phi_\delta(t, \mathcal{H} (v)\times B_{\overline{r}})}\,=\!\bigcup_{w\in \mathcal{H} (v)}\!\left\{\{ w\}\times A^\delta_w\right\}\,.\]
\end{itemize}
\end{thm}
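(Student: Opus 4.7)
\emph{Reduction.} By Proposition~\ref{prop:SKEW-PROD-COMPACT}, for each $\delta\in[0,1]$ the map $\Phi_\delta$ is a continuous autonomous semiflow on the compact metric space $\Hu(v)\times\R^2$. The general theory of dissipative skew-product semiflows over a compact base (Ref.~\onlinecite{kloeden2011nonautonomous,carvalho2012attractors}) reduces the statement to a single estimate: the existence of a radius $\overline r>0$, independent of $w\in\Hu(v)$ and of $\delta\in[0,1]$, such that for every bounded $D\subset\R^2$ there is $t_D\ge 0$ with $X(t,w,D;\delta)\subset B_{\overline r}$ for all $t\ge t_D$ and all $w\in\Hu(v)$. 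Once such a $\overline r$ is available, forward completeness of solutions follows a posteriori, the pullback $\omega$-limit formula defines the family claimed in (i), and $\Hu(v)\times B_{\overline r}$ becomes a compact absorbing set for $\Phi_\delta$, whence (ii) comes from the classical intersection formula.

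\emph{A uniform Lyapunov estimate.} To construct $\overline r$ I would use a weighted quadratic chosen to kill the coupling between the two equations. Set
\[
V_\delta(x,y):=\tfrac{\ep}{2}\,x^2+\tfrac{1-\delta}{2}\,y^2 .
\]
Differentiating along a trajectory driven by an arbitrary $w\in\Hu(v)$, the putative cross-terms $\pm\,\ep(1-\delta)\,xy$ cancel identically and one is left with
\[
\dot V_\delta = -\tfrac{\ep}{3}\,x^4 + \ep\, x^2 + \ep\delta\, x\, w(t) + \ep(1-\delta)\bigl(a y- b y^2\bigr).
\]
Because $|w(t)|\le\|v\|_\infty$ uniformly on $\Hu(v)$, two applications of Young's inequality absorb the linear-in-$x$ and linear-in-$y$ contributions into $-\tfrac{\ep}{3}x^4$ and (for $b>0$) $-\ep(1-\delta)b\,y^2$, giving $\dot V_\delta\le -c\,V_\delta+K$ with $c,K>0$ independent of $w$ and $\delta$. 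Gronwall then supplies $\overline r$.

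\emph{Assembly of the two attractors.} With the absorbing ball in place, item (i) is the pullback $\omega$-limit of $B_{\overline r}$ fiber-by-fiber, as stated. Compactness and minimality are routine; cocycle invariance follows from the semiflow property and continuity of $X$; pullback attraction of an arbitrary bounded $D$ is read off from $X(t,w_{-t},D)\subset X(t-t_D,(w_{-t})_{t_D},B_{\overline r})$ valid for $t\ge t_D$. For (ii), the candidate $\mathcal{A}^\delta:=\bigcup_{w\in\Hu(v)}\{w\}\times A^\delta_w$ (cf.~Remark~\ref{rmk:pullbackFROMskewTOPROCESS}) is $\Phi_\delta$-invariant fibre-wise, compact thanks to the upper semicontinuity of $w\mapsto A^\delta_w$ (a consequence of continuity of $\Phi_\delta$ over a compact base), and absorbs every compact subset of $\Hu(v)\times\R^2$ because $B_{\overline r}$ is absorbed uniformly in $w$; maximality and the intersection representation then follow from the classical argument for autonomous semiflows on a compact metric space.

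\emph{Main obstacle.} The whole proof pivots on the uniform-in-$\delta$ Lyapunov estimate; the remainder is bookkeeping within the general theory. Dissipation in $x$ is robustly supplied by the cubic $-x^3/3$ for every $\delta$, so the delicate point is the $y$-direction, where the weight $(1-\delta)/2$ in $V_\delta$ degenerates as $\delta\nearrow 1$. At the endpoint $\delta=1$ the estimate above must be complemented by a separate dissipativity argument for the skewed problem \eqref{eq:NA_limit}: there the $x$-equation is uncoupled and already absorbing, so $y$ can be handled directly by the variation-of-constants formula using the exponential decay provided by $\ep b>0$. This mild positivity on $b$ (and on $\ep$) is the structural assumption tacitly needed throughout, since without it the $y$-equation carries no friction at all and a bounded global attractor cannot exist.
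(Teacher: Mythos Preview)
Your approach is correct and genuinely different from the paper's. The paper does not use a Lyapunov function; instead it passes to polar coordinates $(r,\theta)$ and argues directly that $r'<0$ once $r\ge\overline r$, uniformly in $w$, $t$ and $\delta$. Both arguments reduce the theorem to the same dissipativity estimate and then invoke the same general machinery (Ref.~\onlinecite{kloeden2011nonautonomous,cheban2002relationship,carvalho2012attractors}) for the assembly of (i) and (ii), so that part of your outline matches the paper essentially verbatim.

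What each route buys: the polar-coordinate computation treats all $\delta\in[0,1]$ at once and in particular needs no separate argument at $\delta=1$; on the other hand, making the assertion ``$r'<0$ for $r\ge\overline r$ uniformly in $\theta$'' fully rigorous requires some care near $\cos\theta=0$, where the cubic damping $-r^3\cos^4\theta/3$ vanishes and one must fall back on the $-\ep b\,r\sin^2\theta$ contribution. Your weighted quadratic $V_\delta$ makes the cancellation of the cross-term $\pm\ep(1-\delta)xy$ transparent and the Young/Gronwall steps are clean, but the degeneration of the weight $(1-\delta)/2$ is not merely an endpoint phenomenon: while your constants $c,K$ in $\dot V_\delta\le -cV_\delta+K$ can indeed be chosen independently of $\delta$, the resulting absorbing sublevel set $\{V_\delta\le K/c\}$ is an ellipse whose $y$-semi-axis $\sqrt{2K/(c(1-\delta))}$ blows up as $\delta\nearrow 1$. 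Hence your argument yields a $\delta$-dependent $\overline r(\delta)$ on $[0,1)$, patched at $\delta=1$ by the variation-of-constants argument you sketch. Since the theorem is asserted for each fixed $\delta$ this is entirely sufficient; simply drop the claim that $\overline r$ is uniform in $\delta$. Your remark that $b>0$ (and $\ep>0$) is structurally needed for friction in the $y$-direction is well taken and applies equally to the paper's polar-coordinate argument, even though the paper does not flag it explicitly.
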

\begin{proof}
Firstly, note that, thanks to Proposition~\ref{prop:SKEW-PROD-COMPACT}, \eqref{eq:problem} induces a continuous skew-product flow $\Phi_\delta:\mathcal{U}\subset\R\times\mathcal{H}(v)\times \R^2\to\mathcal{H}(v)\times \R^2$ on the compact metric space $\mathcal{H}(v)$. Moreover, since $v$ is essentially bounded, then every element of $\mathcal{H}(v)$ shares the same norm in $L^\infty$. 

For any $w\in\mathcal{H}(v)$, considering the induced dynamical systems in polar coordinates, we have that the radial equation satisfies
\[
\begin{split}
r'= r^3\Bigg[&-\frac{\cos^4(\theta)}{3} +\frac{\delta w(t)+a \ep \sin(\theta)}{r^3} \\
& + \frac{\cos^2(\theta)-\ep\sin^2(\theta)+(1-\delta-\ep)\sin(\theta)\cos(\theta)}{r^2} \,\Bigg].
\end{split}
\]
Hence, there is $\overline r=\overline r(a, \ep, v)$ such that $r'<0$ whenever $r\ge\overline r$  uniformly on $w\in\mathcal{H}(v)$, $\delta\in[0,1]$ and $t\in\R$.
Consequently, the flow $\Phi$ is uniformly bounded dissipative in the sense that all the solutions of~\eqref{eq:problem} are defined on positive half-lines and uniformly ultimately bounded by the sphere of radius $r=\overline r$. 
Therefore, among other references, (i) follows from Theorem~3.20 in ~Ref.~\onlinecite{kloeden2011nonautonomous}. 
In turn, the existence of a global attractor $\mathcal A^\delta$ follows from the compactness of $\mathcal{H}(v)$ and from~Theorem 2.2 in Ref.~\onlinecite{cheban2002relationship} and, as shown in Theorem 16.2 in~Ref.~\onlinecite{carvalho2012attractors}, $A^\delta_w$ is the section of $\mathcal A^\delta$ over $w$, that is,
\begin{equation*}
{\mathcal A^\delta}=\bigcup_{w\in\mathcal{H}(v)}\left\{\{ w\}\times A^\delta_w\right\}\, ,
\end{equation*}
which finishes the proof.
\end{proof}

In the next sections, we aim to give a finer description of the global attractor. We shall firstly focus on the boundary cases $\delta=0$ and $\delta=1$ and nearby values of $\delta$.

\subsection{Global attractor for $\delta=0$ (FitzHugh-Nagumo model) and nearby values}
\label{subsec:global-attractor-d=0}


\begin{figure*}
    \centering
\begin{overpic}[width=0.48\textwidth]{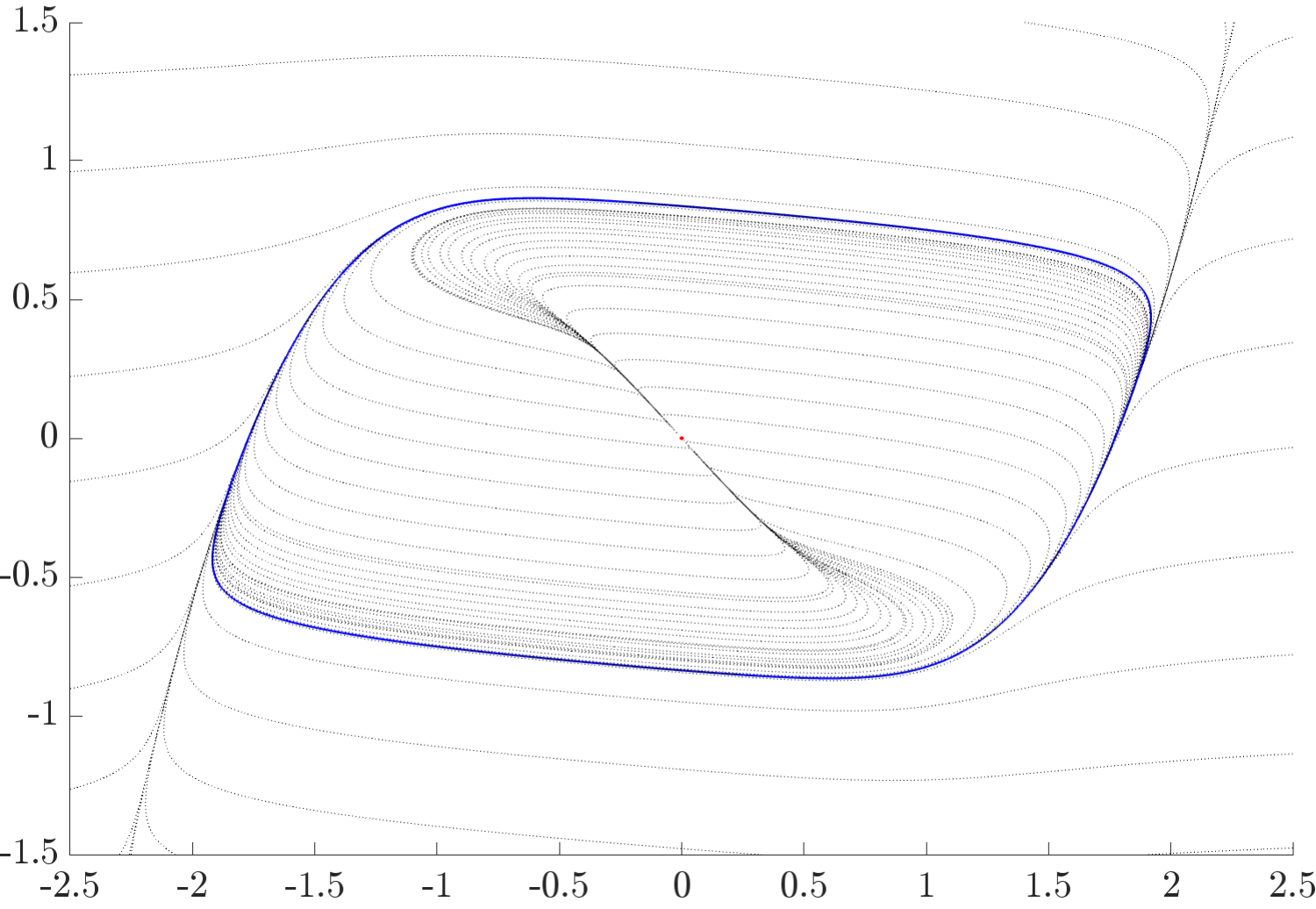} 
\put(51,-2){$x$}
\put(-3,35){$y$}
\end{overpic}
\hspace{5mm}
\begin{overpic}[width=0.48\textwidth]{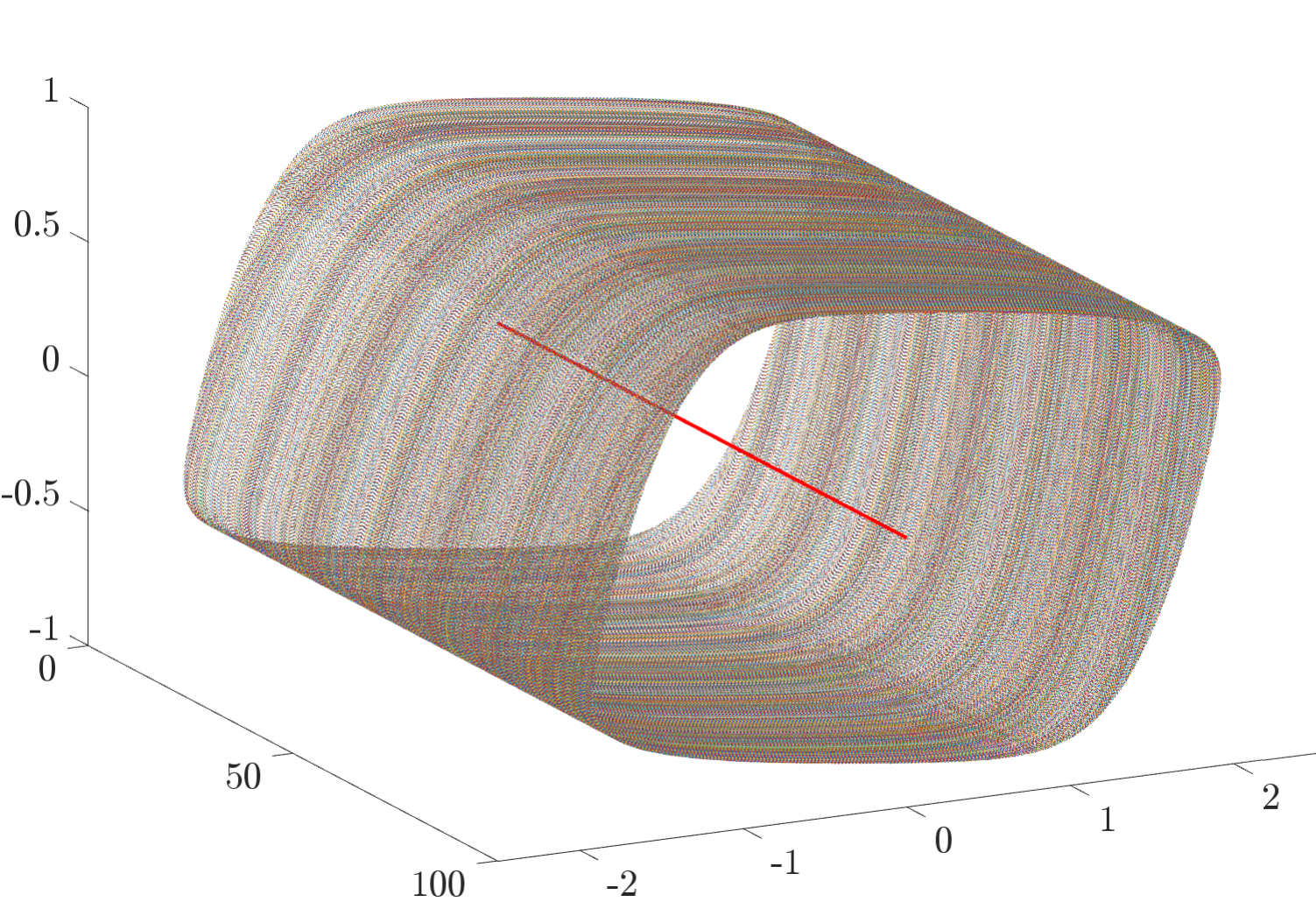} 
\put(15,7){$t$}
\put(68,2){$x$}
\put(-1,44){$y$}
\end{overpic}
\caption{On the left-hand side, dynamics of the autonomous FitzHugh-Nagumo model~\eqref{eq:FHN} for $a=0$, $b=0.7$ and $\ep=0.1$. The attracting periodic orbit is depicted in blue. The unstable equilibrium in red. The black dotted lines represent sample trajectories. On the right-hand side, the system is portrayed in the extended phase space $\R\times\R^2$ giving rise to a trivial integral manifold.}
\label{fig:per-orbit-FHN}
\end{figure*}

We briefly recall the construction of an attracting limit cycle for the FitzHugh-Nagumo model via geometric singular perturbation theory. Equation~\eqref{eq:FHN} is generally referred to as in its fast time-scale as opposed to its slow time-scale obtained via the change of variable $\tau=\ep t$,
\begin{equation}\label{eq:vdp_slow}
\begin{sistema}
\ep\dot  x = y-\frac{x^3}{3} +x\\
\dot y =a-x-by.\\
\end{sistema}
\end{equation}
By setting $\ep=0$ in~\eqref{eq:vdp_slow}, we obtain a reduced (or slow) system with an algebraic constraint 
\begin{equation}\label{eq:vdp_slow_0}
\begin{sistema}
0= y-\frac{x^3}{3} +x\\
\dot y =a-x-by.\\
\end{sistema}
\end{equation}
The algebraic constraint 
\[
S=\{(x,y)\in\R^2\mid y=x^3/3-x\},
\]
is called the \emph{critical manifold} of~\eqref{eq:vdp_slow}.
Note also that the points of $S$ are equilibrium points for the so-called layer problem (or fast subsystem), which is obtained by setting  $\ep=0$ in the fast time-scale problem~\eqref{eq:FHN},
\begin{equation}\label{eq:vdp_fast_0}
\begin{sistema}
x' = y-\frac{x^3}{3} +x\\
y' =0.\\
\end{sistema}
\end{equation}
Fenichel theory~\cite{fenichel1979geometric} guarantees the persistence of any compact connected and normally hyperbolic subset of $S$ when $\ep>0$ is small enough. 
Recall that an invariant manifold is called normally hyperbolic if, loosely speaking, the attraction to the manifold in forward and/or backward time is stronger than the dynamics on the manifold itself; see~Ref.~\onlinecite{fenichel1971persistence,fenichel1974asymptotic,fenichel1977asymptotic,hirsch2006invariant,kuehn2015multiple} for the exact definition. In the case of fast-slow system like ours, normal hyperbolicity of a manifold can be verified in a simpler way: if $M$ is a subset of $S$, it is sufficient to show that all the points in $M$ are hyperbolic equilibria of the layer problem.

Therefore, since $\frac{\partial}{\partial x}f(x,y)=1-x^2\neq0$ for all $x\in\R\setminus\{\pm1\}$, the critical manifold is normally hyperbolic except for $p_\pm=(\pm1,\mp 2/3)$. Specifically, the sign of $\frac{\partial}{\partial x}f(x,y)$ characterizes the branches $S\cap\{(x,y)\in\R^2\mid |x|>1\}$ as attracting and the branch $S\cap\{(x,y)\in\R^2\mid |x|<1\}$ as repelling, regardless of the value of $a\in\R$ and $b\ge0$. The so-called slow flow, that is, the dynamics of the reduced system~\eqref{eq:vdp_slow_0} can be obtained via the invariance equation
\[
\begin{split}
a-x-by(x)&=\frac{d}{d\tau}y=\frac{d}{d\tau}\left(\frac{x^3}{3} -x\right)=(x^2-1)\frac{d}{d\tau}x\\
&\Rightarrow\ \ \dot x =\frac{a-x-b(x^3/3-x)}{x^2-1}.
\end{split}
\]
The slow flow is well-defined everywhere except for $x=\pm 1$ (assuming that $a\neq\pm(1-2b/3)$). 
For fixed parameters satisfying $|a|<1-2b/3$, the slow flow is such that $\dot x>0$ for $x<1$ and $\dot x<0$ for $x>1$.
A desingularization at $x=\pm 1$ can be obtained by rescaling time with the factor $(x^2-1)$ although at a cost, the reversal of time direction on the repelling branch of the critical manifold. We will momentarily ignore this issue as it has no effect on what comes next. 
Consider the \emph{candidate closed orbit} obtained by concatenating orbits on the attracting branches of the critical manifolds with quick horizontal jumps from one of the attracting branches of $S$ onto the other attracting branch of $S$ at the fold points $p_\pm=(\pm1,\mp 2/3)$. 
In other words, the candidate orbit follows the slow flow on $S$  up to a fold point and then it quickly transitions via a trajectory of the layered problem~\eqref{eq:vdp_fast_0} to the opposite attracting branch of $S$ and so on. 
Under some genericity conditions, which for \eqref{eq:vdp_slow} read as,
\begin{equation}\label{eq:H1-FHN}
a-1+2b/3\neq0\quad\text{and}\quad a+1-2b/3\neq0,
\end{equation}
and some assumptions on the slow flow, which for \eqref{eq:vdp_slow} read as,
\begin{equation}\label{eq:H2-FHN}
a-x-b(x^3/3-x)
\begin{cases}
    >0&\text{if } x<-1,\\
    <0 &\text{if } x>1,
\end{cases}
\end{equation}
\begin{figure}
    \centering
\begin{overpic}[width=0.48\textwidth]{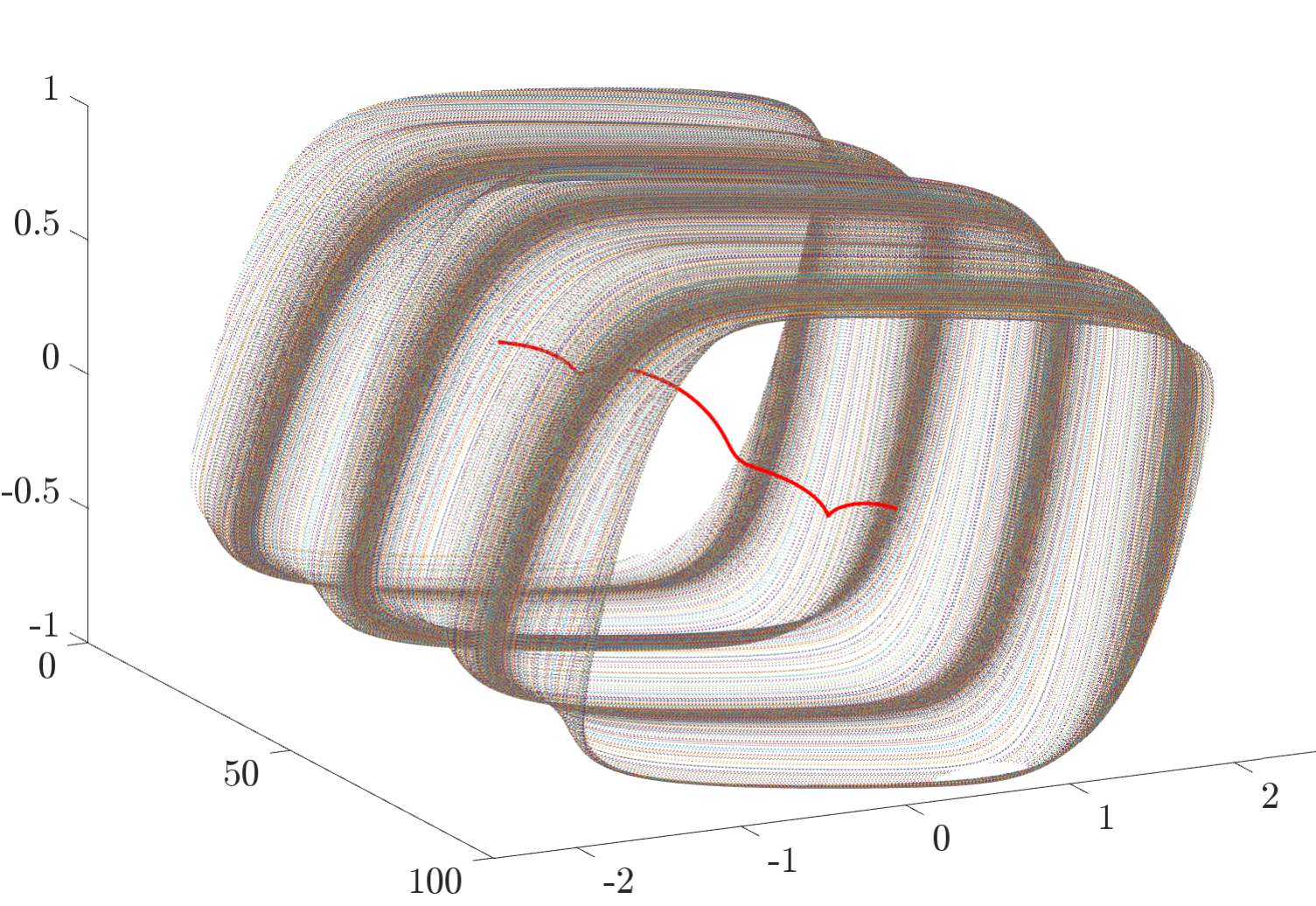} 
\put(15,7){$t$}
\put(68,2){$x$}
\put(-1,44){$y$}
\end{overpic}
\caption{Numerical approximation of the integral manifold of~\eqref{eq:problem} for $\delta=0.07$, $a=0$, $b=0.7$, $\ep=0.1$ and quasi-periodic forcing $v(t)=\cos(2\pi t/30)+\sin(2\pi t/(30\sqrt{5}))$. The numerically approximated hyperbolic completely unstable trajectory is depicted in red.}
\label{fig:perturbed-FHN}
\end{figure}
Theorem 2.1 in Ref.~\onlinecite{krupa2001relaxation} shows that, for $\ep>0$ sufficiently small, the candidate orbit perturbs into a strongly attracting periodic orbit of \eqref{eq:vdp_slow} that lies $O(\ep^{2/3})$-close to the candidate and approaches it in Hausdorff distance as $\ep\to0$. 
Since in our analysis $\ep>0$ is kept fixed, we shall denote this orbit by $\gamma_0(t)=\big(x_0(t),y_0(t)\big)$, where the subindex $0$ refers to the fact that we are dealing with the case $\delta=0$. 
The term strongly attracting refers to the fact that its non-unitary Floquet exponent is negative. 

As we have seen in Section~\ref{sec:floquet_hyperbolic}, this means that, whenever $\ep>0$ is small enough, the attracting periodic orbit of \eqref{eq:FHN} induces an attracting integral manifold $S_0$ for the process associated to it on the augmented phase space $\R\times\R^2$. In particular, by construction, it can be shown that $S_0$ is homeomorphic to $\R\times\mathbb{S}^1$ since the graph of the periodic limit cycle $\gamma_0(t)$ can be parametrized by the angle $\theta\in[0,2\pi]$ modulo $2\pi$. Figure~\ref{fig:per-orbit-FHN} depicts the strongly attracting periodic orbit of \eqref{eq:vdp_slow} for $a=0$, $b=0.7$ and $\ep=0.1$ and the induced integral manifold in $\R\times \R^2$.  

Furthermore, this integral manifold persists under small perturbations\cite{hale1969ordinary}. Hence, for $\delta>0$  sufficiently small, an attractive integral manifold $S_\delta$---also homeomorphic to $\R\times\mathbb{S}^1$ exists for \eqref{eq:problem}.
In accordance to Sec.~\ref{sec:floquet_hyperbolic}, the equation on $S_\delta$ reduces to 
\[
\dot \theta=1+\delta\Theta_0(t,\theta,\delta).
\]
In the extended notation of the skew-product flow of Section \ref{subsec:skew-prod}, we can consider the integral manifold $S_\delta$ parametrized with respect to the elements of $\Hu(v)$, that is,
\[
S_\delta=\{(w,\theta,r_\delta(w,\theta))\mid \theta\in[0,2\pi],\,w\in\Hu(v)\} 
\]
This approach allows to interpret the flow $\varphi$ on $\Hu(v)\times S_\delta$---that is the restriction of the skew-product flow $\Phi$ to $\Hu(v)\times S_\delta$---as a circle extension of the base flow $\sigma_t$ on $\Hu(v)$ in the sense that, 
\[
\pi\circ\varphi(t,w,p)=\sigma_t\circ\pi(w,p),
\]
where $\pi:S_\delta\to\Hu(v)$ is the natural projection $(w,p)\mapsto \pi(w,p)=w$. In fact, circle flows arising from almost periodically forced, damped nonlinear oscillators are a well-studied topic \cite{huang2009almost, yi2004almost,romeiras1987strange,pikovsky2006strange, jager2006denjoy,jager2006towards}. 
The resulting dynamical scenarios can be fairly complicated and certainly exceeding the complexity of the base flow due to the interaction between internal and forcing frequencies. 
For example, if the model is periodically forced, it follows from the classical Poincaré–Birkhoff–Denjoy theory that a minimal set for $\varphi$ can be periodic, almost periodic or almost automorphic with a Cantor structure. Under almost periodic forcing, the dynamics on $\Hu(v)\times S_\delta$ can be even more complicated. Ref.~\onlinecite{huang2009almost} provides extended information in this regard.  In particular, even in quasi-periodically forced nonlinear oscillators as simple as Van der Pol, numerical studies have shown the existence of so-called strange non-chaotic attractors, that is invariant sets which have a fractal-like geometric structure but admit no positive Lyapunov exponent\cite{grebogi1984strange,romeiras1987strange}.

In order to complete our analysis of the global attractor for \eqref{eq:problem} when $\delta$ is close to zero, we note that under the assumptions
\eqref{eq:H1-FHN} and \eqref{eq:H2-FHN}, the autonomous FitzHugh-Nagumo model \eqref{eq:FHN} has also one hyperbolic completely unstable equilibrium point which lies inside the periodic orbit in $\R^2$. Note that an unstable hyperbolic equilibrium for an autonomous differential equation is in particular a completely unstable hyperbolic solution in the nonautonomous sense presented at the end of Section \ref{subsec:attractors}.
In turn, the (nonautonomous notion of) hyperbolicity guarantees robustness against small perturbations\cite{potzsche2011nonautonomous}. Hence, for any $f_0,g_0$ bounded and $\delta\neq0$ sufficiently small, the hyperbolic equilibrium is perturbed into a hyperbolic completely unstable trajectory for the flow induced by \eqref{eq:AUT-PLANAR}. Figure \ref{fig:perturbed-FHN} shows the persisting integral manifold and hyperbolic repelling solution for $\delta=0.07$ and same values for the other parameters as in Figure \ref{fig:per-orbit-FHN}.

\subsection{Global attractor  for $\delta=1$ (non-autonomous skewed problem) and nearby values}\label{subsec:global-attract-d=1}

In this subsection, we show the existence of a hyperbolic attracting solution for the  problem~\eqref{eq:NA_limit}. 
First of all, note that the dynamical scenarios of the first equation in~\eqref{eq:NA_limit} have been exhaustively studied by Due\~nas et al.~\cite{duenas2022generalized}. 
In particular, for any $v\in C(\R,\R)$ bounded and uniformly continuous, there is at least one, and at most two, hyperbolic attracting solutions of $\dot x = v(t)-\frac{x^3}{3} +x$, which are copies of the base, i.e., if $v$ is $\tau$-periodic (or almost periodic), the same holds for the hyperbolic attracting solutions. 
We shall be working under the assumption that there is exactly one hyperbolic attracting solution $\phi_1:\R\to\R$ for $\dot x = v(t)-\frac{x^3}{3} +x$, that we shall denote by $\phi_1:\R\to\R$. In this case, in fact, $\phi_1$ is also globally attracting. 
As explained in Subsection \ref{subsec:attractors}, hyperbolicity of $\phi_1$ means that there are $K\ge1$ and $\alpha>0$ such that
\begin{equation}\label{eq:EXP-DICH-hyp-sol-cubic}
  \psi(t,t_0):=\exp\Big(\int_{t_0}^t\big(1-(\phi_1(s))^2\big)\, ds\Big)\le Ke^{-\alpha(t-t_0)}.  
\end{equation}
By plugging $\phi_1(t)$ into~\eqref{eq:problem}, and integrating the scalar linear differential equation $y'=\ep(a-\phi_1(t)-by)$, with $b>0$, for initial datum $y_0$ at time $t_0$, we obtain the following solution to \eqref{eq:NA_limit} defined for all $t>t_0$, 
\[
\begin{sistema}
x(t)=\phi_1(t)\\
\begin{split}
\displaystyle y(t,t_0,y_0) =\frac{a}{b}+e^{-b\ep(t-t_0)}\left(y_0-\frac{a}{b}\right)&-\ep\int_{t_0}^te^{-b\ep(t-s)}\phi_1(s)\,ds.\\    
\end{split}
\end{sistema}
\]

The pullback limit as $t_0\to-\infty$ gives  the bounded solution $\phi:\R\to\R^2$ defined by,
\[
\begin{sistema}
x(t)=\phi_1(t)\\
\displaystyle y(t) =\frac{a}{b}-\ep\int_{-\infty}^te^{-b\ep(t-s)}\phi_1(s)\,ds=:\phi_2(t).\\
\end{sistema}
\]
We shall show that, this is a hyperbolic solution of \eqref{eq:NA_limit}. The variational equation of \eqref{eq:NA_limit} along $\phi$
\[
\dot z= \begin{pmatrix}
    1-(\phi_1(t))^2&0\\
    -\ep&-b\ep
\end{pmatrix}z
\]
admits the principal matrix solution at $t_0\in\R$,
\[
\Psi(t,t_0)=\begin{pmatrix}
    \psi(t,t_0)&0\\
    -\ep\int_{t_0}^t e^{-b\ep(t-s)}\psi(s,t_0) \,ds&
    e^{-b\ep(t-t_0)}
\end{pmatrix}
\]
Now note that, using \eqref{eq:EXP-DICH-hyp-sol-cubic}, 
\[
\begin{split}
&e^{-b\ep(t-t_0)}+\ep\int_{t_0}^t e^{-b\ep(t-s)}\psi(s,t_0) \,ds\\
&\qquad\le e^{-b\ep(t-t_0)}+\frac{\ep K}{b\ep-\alpha} \big(e^{-\alpha (t-t_0)}- e^{-b\ep(t-t_0)\big)}
\end{split}
\]
and for any $b>0$ and $\alpha>0$, we can choose $\ep>0$ sufficiently small so that $b\ep-\alpha<0$ and the right-hand side of the previous inequality is smaller than $e^{-b\ep(t-t_0)}$. Therefore,
\[
\|\Psi(t,t_0)\|_\infty\le \ Ke^{-b\ep(t-t_0)},
\]
which proves the sought-for hyperbolicity of $\phi(t)=(\phi_1(t),\phi_2(t))$. Moreover, it is immediate to prove that this entire bounded solution is globally attractive. Being hyperbolic, the solution $\phi$ perturbs into a hyperbolic attracting solution of \eqref{eq:problem} for $\delta$ sufficiently close to $1$\cite{potzsche2011nonautonomous}.

Figure \ref{fig:hyp-sol} shows the numerical approximation of the unique hyperbolic attracting solution to \eqref{eq:NA_limit} (upper panel) and its persistence upon decreasing $\delta$ (lower panel). The rest of parameters are as in Figures \ref{fig:per-orbit-FHN} and \ref{fig:perturbed-FHN}.

Note also that, if  $v$ is $\tau$-periodic, then using the change of variable $s=u+\tau$ in 
\[
\begin{split}
\phi_2(t+\tau)&=\frac{a}{\ep}+\int_{-\infty}^{t+\tau}e^{-\ep(t+\tau-s)}\phi(s)\,ds\\
&=\frac{a}{\ep}+\int_{-\infty}^te^{-\ep(t-u)}\phi(u+\tau)\,du\\
&=\frac{a}{\ep}+\int_{-\infty}^te^{-\ep(t-u)}\phi(u)\,du=\phi_2(t)(t)
\end{split}
\]
yields the required periodicity for $\phi$.

\begin{figure}
    \centering
\begin{overpic}[width=0.48\textwidth]{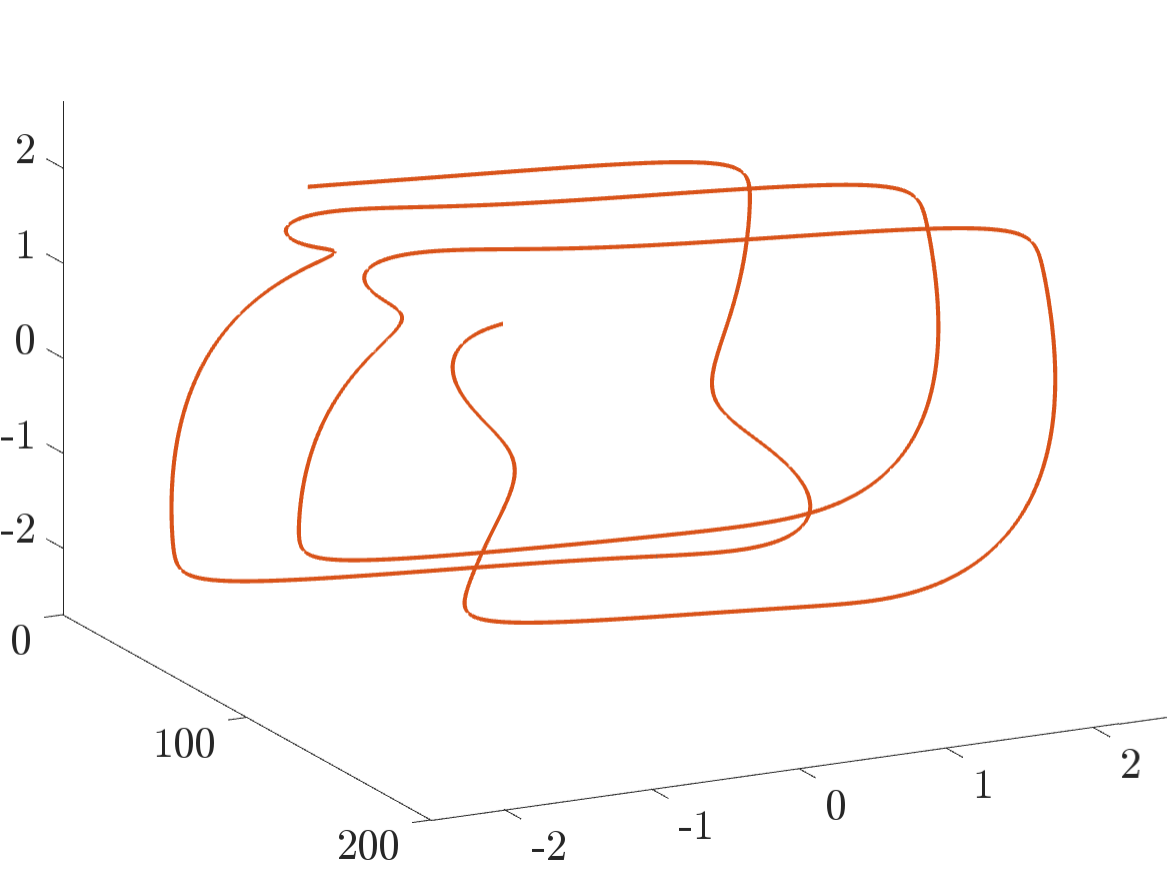} 
\put(85,65){$\delta=1.0$}
\put(15,6){$t$}
\put(68,1){$x$}
\put(-1,44){$y$}
\end{overpic}
\begin{overpic}[width=0.48\textwidth]{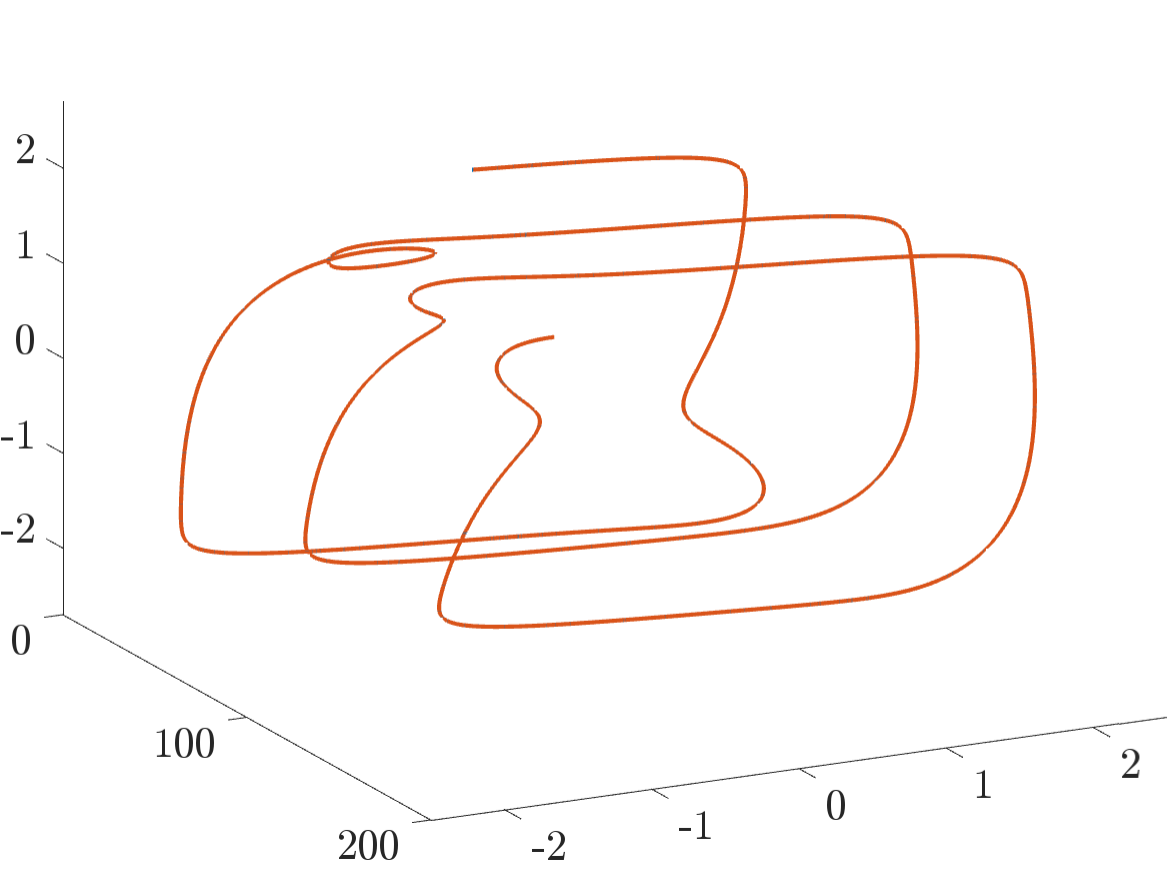} 
\put(85,65){$\delta=0.7$}
\put(15,6){$t$}
\put(68,1){$x$}
\put(-1,44){$y$}
\end{overpic}
\caption{Upper panel: numerical approximation of the hyperbolic attracting solution $\phi$ for \eqref{eq:problem} with $\delta=1$, $a=0$, $b=0.7$, $\ep=0.1$ and almost periodic forcing $v(t)=\cos(2\pi t/30)+\sin(2\pi t/(30\sqrt{5}))$ as in Figure \ref{fig:perturbed-FHN}. Lower panel: the same problem with $\delta=0.7$. The orbits were obtained by numerically integrating \eqref{eq:problem} for three hundred initial conditions starting at uniformly spaced initial times in $[-200,-100]$ and then discarding the transient till $t=0$.}
\label{fig:hyp-sol}
\end{figure}

\subsection{Global attractor for intermediate values of $\delta$}\label{subsec:global-attractor-int-d}
The case of intermediate values of $\delta$ away from 0 and 1 is considerably more complicated and highly dependent on the choice of $v$. Nonetheless, we are able to investigate the finite-time behaviour of singularly perturbed problems featuring an explicit dependence on the fast time such as \eqref{eq:problem} thanks to the results developed in Ref.~\onlinecite{longo2024tracking}. 
Assume $v$ is almost periodic. Then, the flow on the hull of $v$ is minimal and uniquely ergodic.
Let us also set some further notation. Write \eqref{eq:problem} in the slow time-scale
\[
\begin{sistema}
\ep x' = (1-\delta)y+\delta v(\tau/\ep)-\frac{x^3}{3} +x\\
y' =a-x-b y
\end{sistema}
\]
and call $(x_\ep(\tau),y_\ep(\tau))$ the solution of the previous problem with initial condition $(x_0,y_0)$ at time zero.
Moreover, consider the so-called layered problems obtained from \eqref{eq:problem} for $\ep=0$,
\begin{equation} \label{eq:layered-prob}
x' = (1-\delta)y+\delta v(t)-\frac{x^3}{3} +x,
\end{equation}
for $y\in [-k,k]$ with $k>0$, and $\delta\in[0,1]$. This is family of scalar nonautonomous d-concave differential equations dependent on the parameters $y\in [-k,k]$ and $\delta\in[0,1]$. Let us fix any $\delta\in[0,1]$ and remove the dependence in this parameter from the upcoming notation. For each $y\in [-k,k]$, \eqref{eq:layered-prob} induces a skew-product flow
\[
\begin{split}
   \Phi_{y}:\R\times \Hu(v)\times\R&\to\Hu(v)\times\R,\\
   (t,w,x_0)&\mapsto  \Phi_{y} (t,w,x_0)=\big(w_t, x(t,w,x_0;y)\big),
\end{split}
\]
where $x(t,w,x_0;y)$ denotes the solution at time of $t$ of \eqref{eq:layered-prob} (for a fixed pair $(y,\delta)$) with initial condition $x(0)=x_0$. Each of these flows admits a global attractor $\mathcal{A}^y=\cup_{w\in\mathcal{H}(v)}\{w\}\times A^y_w$ whose structure has been described  in detail in Ref.~\onlinecite{duenas2022generalized} and briefly recalled in Section \ref{subsec:global-attract-d=1}. The dependence on $\delta$ for $\mathcal{A}^y$ is left tacit as we will not be varying $\delta$ within this paragraph.
Numerical evidence suggests that for every $y\in[-3,3]$, the skew-product flows $ \Phi_{y}$ only have one copy of the base which corresponds to the global attractor $\mathcal{A}^y$. In other words, there is a continuous map $\eta:\Hu(v)\times[-3,3]\to\R$ such that $A^y_w=\eta(w,y)$. Consequently, the flow $\Phi_y$ has a unique invariant measure concentrated on $\mathcal{A}^y$; let's call $\mu^y$ its projection on $\R$. Then, Theorem 4.8 in Ref.~\onlinecite{longo2024tracking} allows us to draw the following conclusions. Fix $\tau_0>0$,
\begin{itemize}[leftmargin=*]
    \item for any sequence $\ep_j\to0$ there is a subsequence $\ep_k\downarrow0$ such that $y_{\ep_k}$ converges uniformly in $[0,\tau_0]$ to the unique solution $y_0(\tau)$ of the differential equation
    \[
    y'=\int_{\R} (a-x-by)\,d\mu^y(x),\quad y(0)=y_0.
    \]
    The right-hand side above can be understood as an averaged problem in the sense of Artstein\cite{artstein2007averaging} and Sanders and Verhulst\cite{sanders1985averaging}.
    \item Given $\sigma>0$, there exists $T=T(\sigma,x_0,\mathcal A)>0$ and $k_0=k_0(\sigma,T)>0$ with $2T<\tau_0/\ep_{k_0}$ such that for every $k\ge k_0$,
    \[
    |x_{\ep_k}(t)-\eta(v_t,y_0(\ep_k t))|\le \sigma,\quad\text{for all } t\in[T,\tau_0/\ep_k].
    \]
\end{itemize}

In other words, as $\ep\to0$, the slow variable converges uniformly on $[0,\tau_0]$ to the unique solution of an averaged scalar problem anchored at the union of the hyperbolic solutions of \ref{eq:layered-prob} upon the variation of $y\in[-k,k]$. The fast variable, in turn, approaches a path determined by the value at time $t\in[0,\tau_0/\ep_k]$ of the hyperbolic solution of \eqref{eq:layered-prob} for $y=y_0(\ep_k t)$.

\begin{figure*}
    \centering
\begin{overpic}[width=0.33\textwidth]{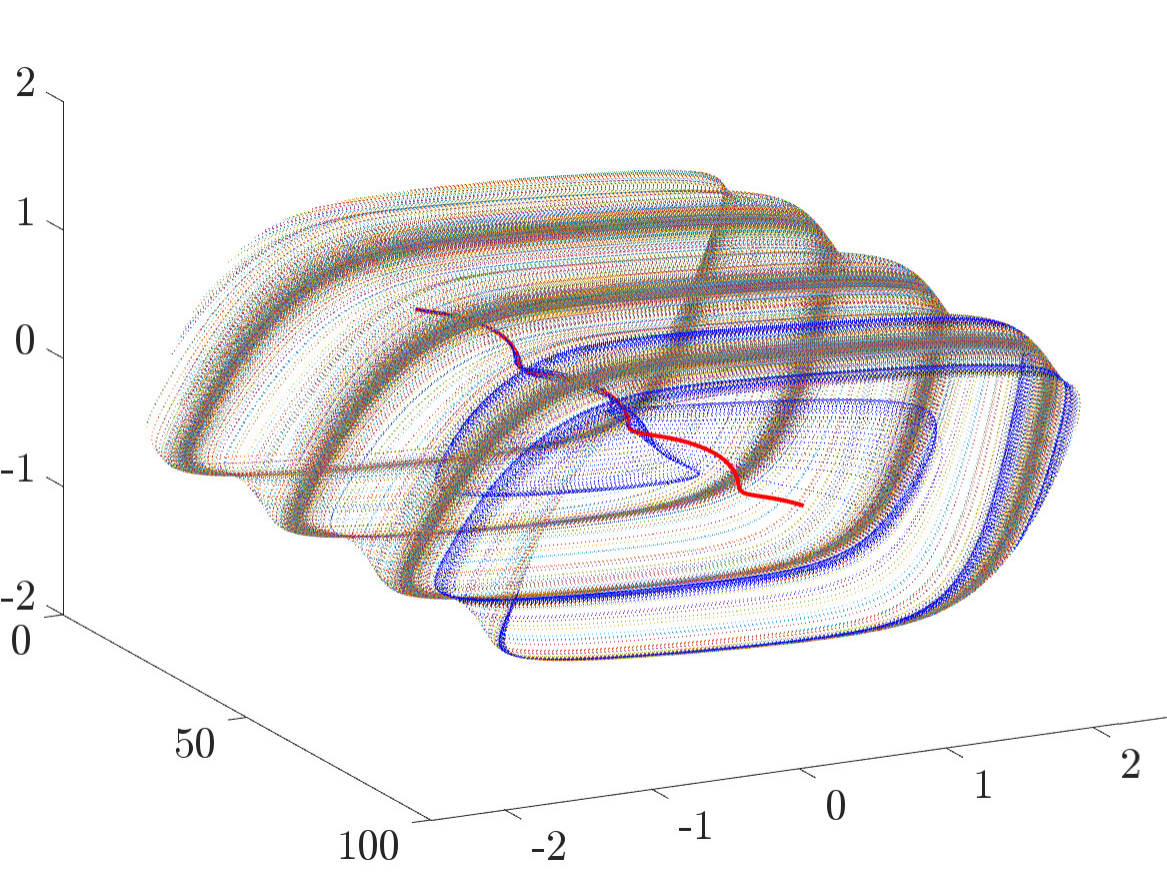} 
\put(42,72){$\delta=0.1$}
\put(15,6){$t$}
\put(70,1){$x$}
\put(-5,44){$y$}
\end{overpic}
\begin{overpic}[width=0.33\textwidth]{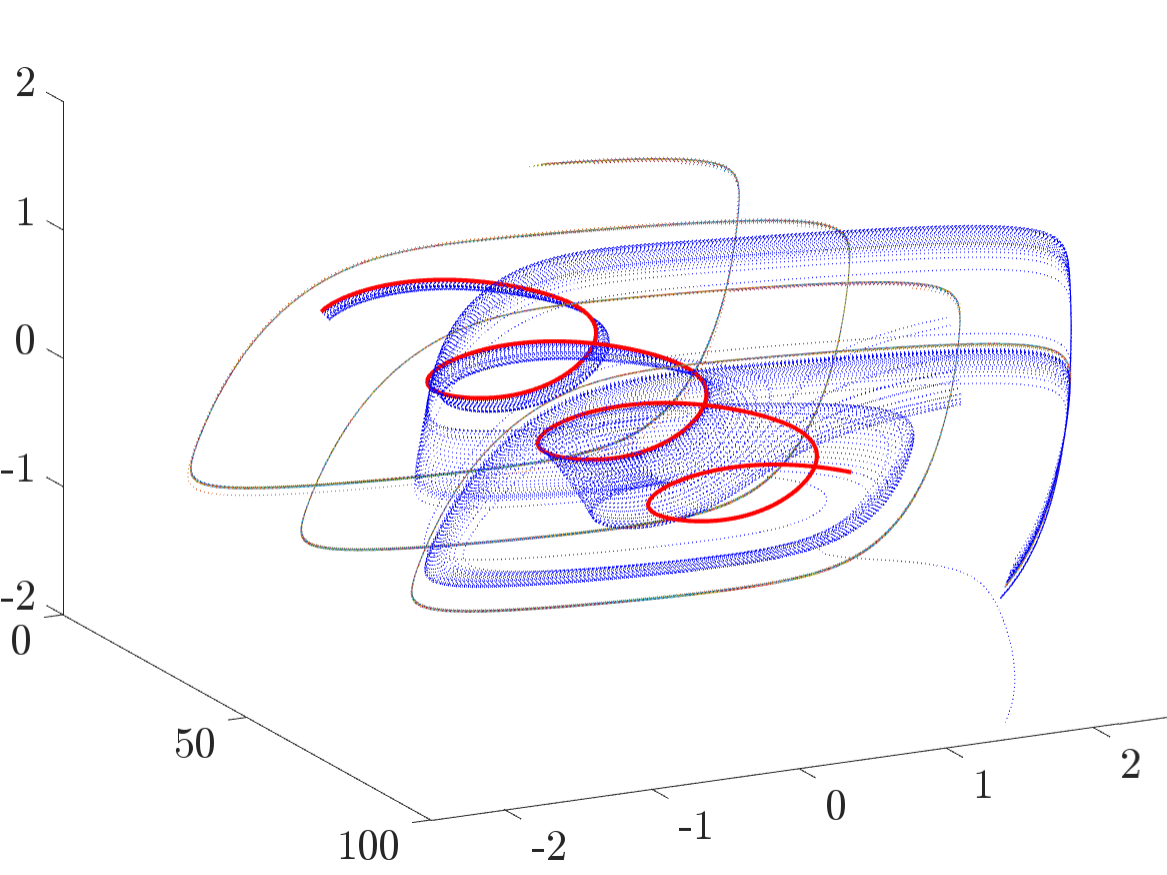} 
\put(42,72){$\delta=0.6$}
\put(15,6){$t$}
\put(70,1){$x$}
\put(-5,44){$y$}
\end{overpic}
\begin{overpic}[width=0.33\textwidth]{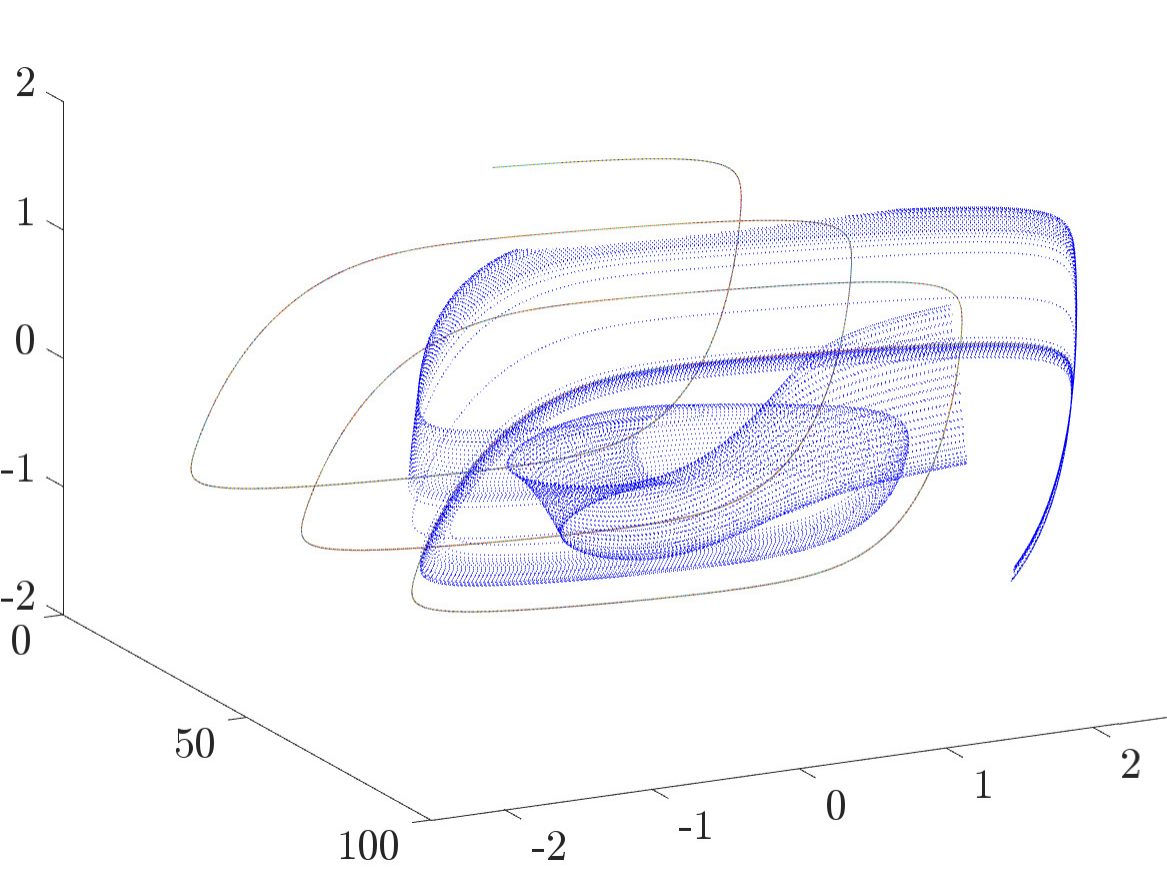} 
\put(42,72){$\delta=0.72$}
\put(15,6){$t$}
\put(70,1){$x$}
\put(-5,44){$y$}
\end{overpic}
\\[2ex]
\begin{overpic}[width=0.33\textwidth]{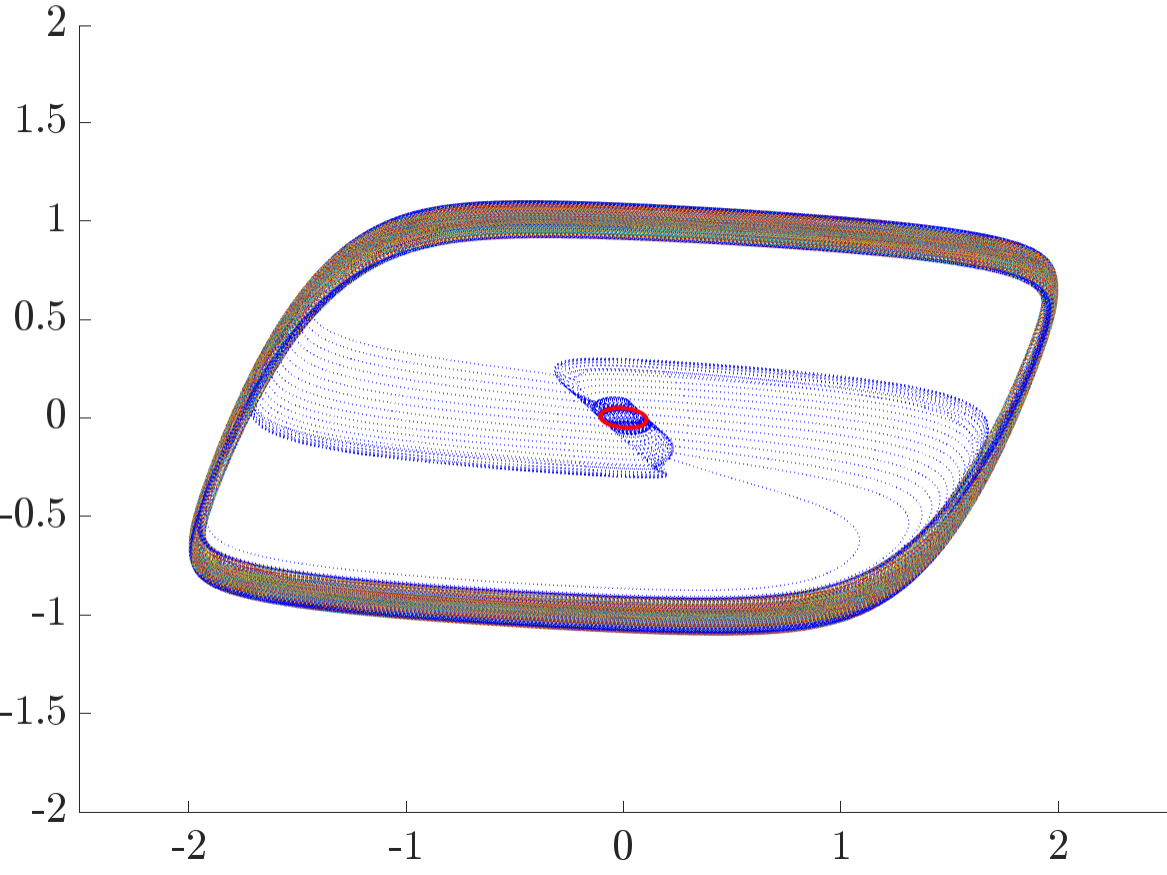} 
\put(51,-3){$x$}
\put(-1,39){$y$}
\end{overpic}
\begin{overpic}[width=0.33\textwidth]{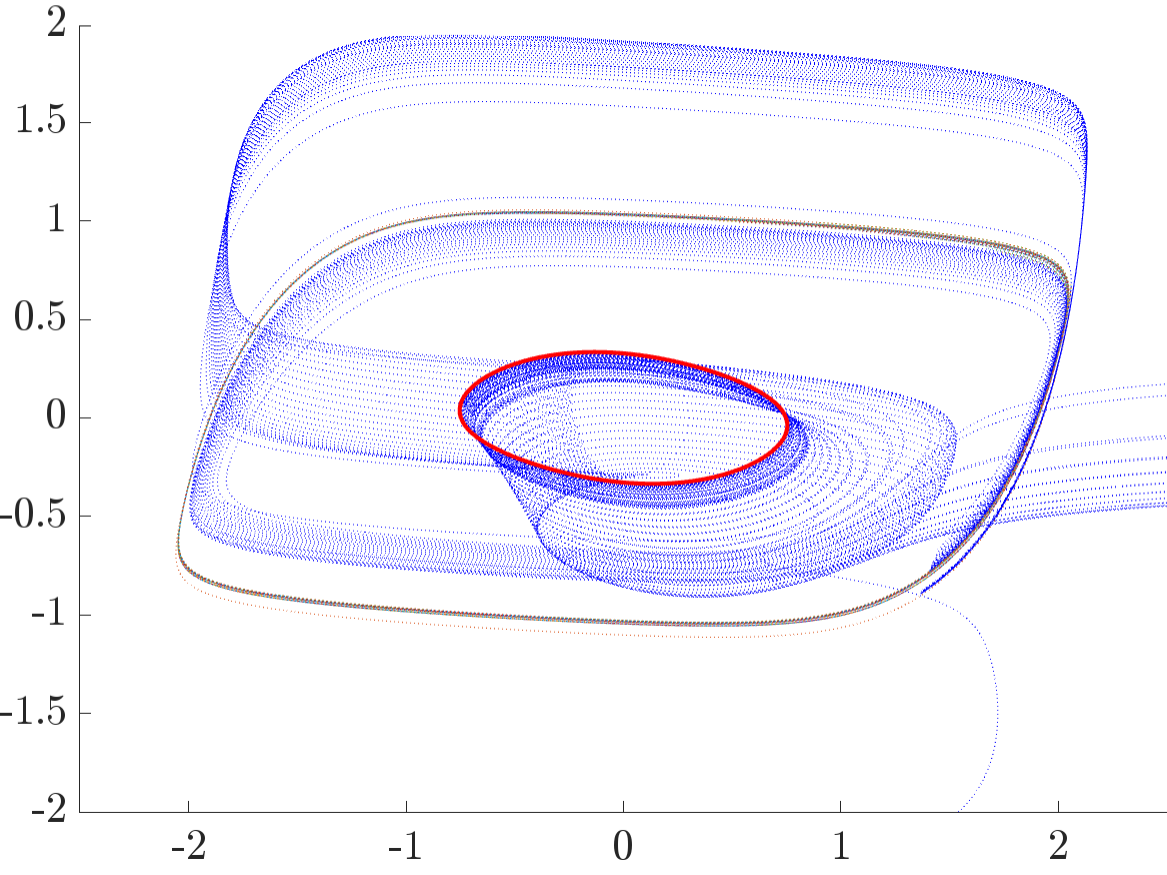} 
\put(51,-3){$x$}
\put(-1,39){$y$}
\end{overpic}
\begin{overpic}[width=0.33\textwidth]{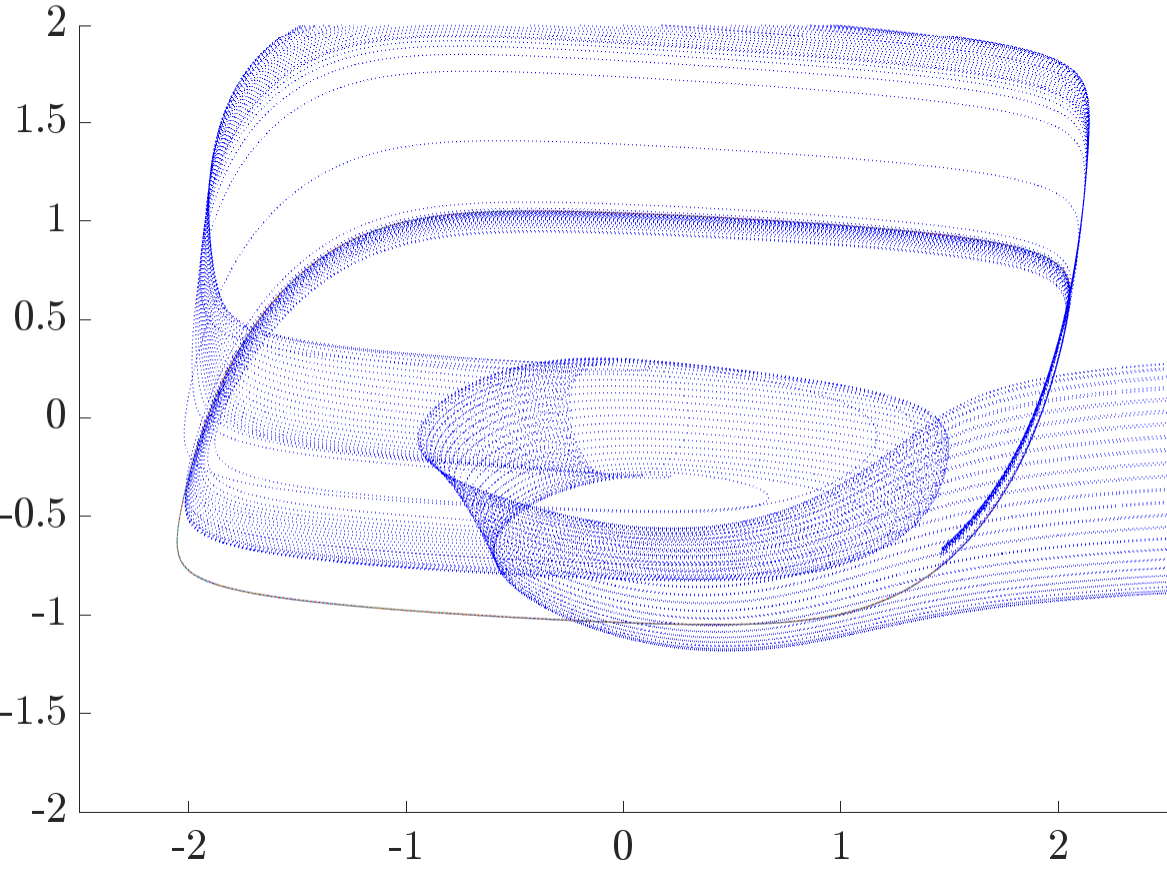} 
\put(51,-3){$x$}
\put(-1,39){$y$}
\end{overpic}
\\[2ex]
\begin{overpic}[width=0.33\textwidth]{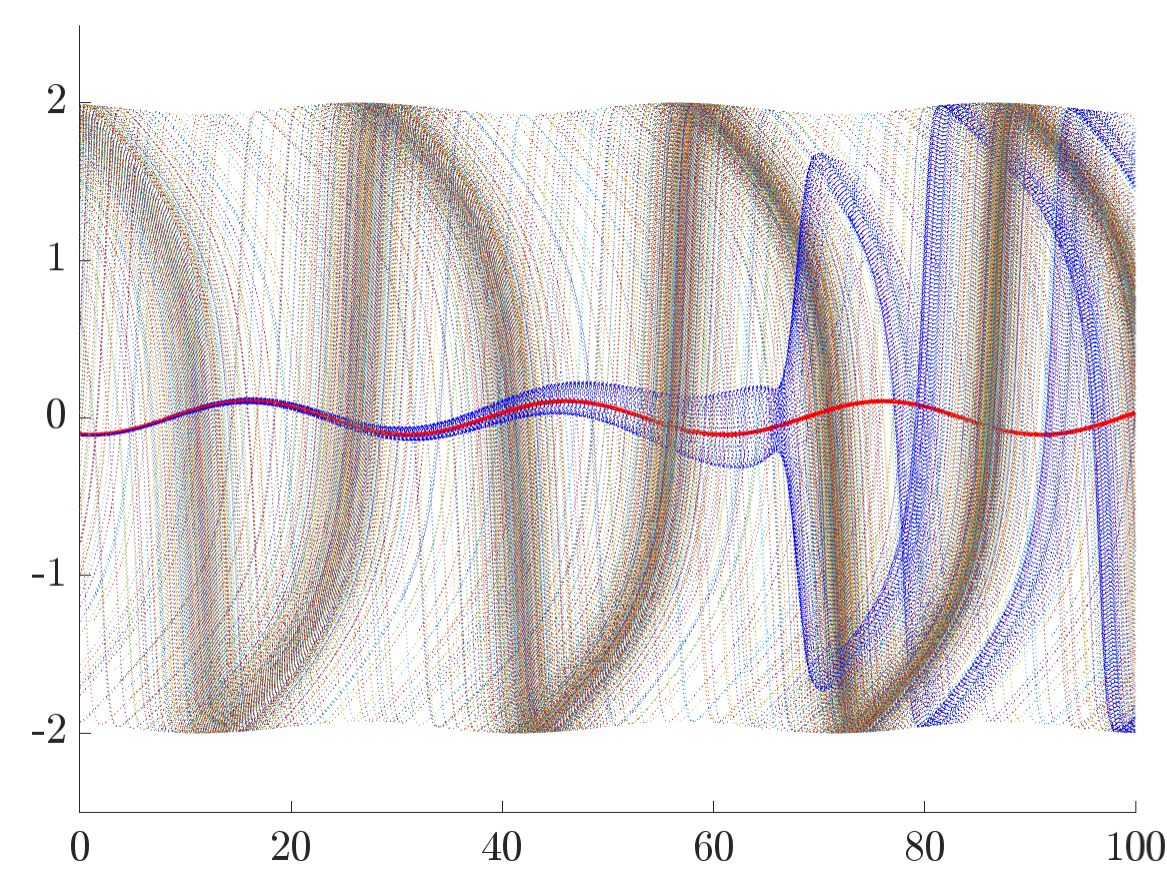} 
\put(51,-3){$t$}
\put(-1,39){$x$}
\end{overpic}
\begin{overpic}[width=0.33\textwidth]{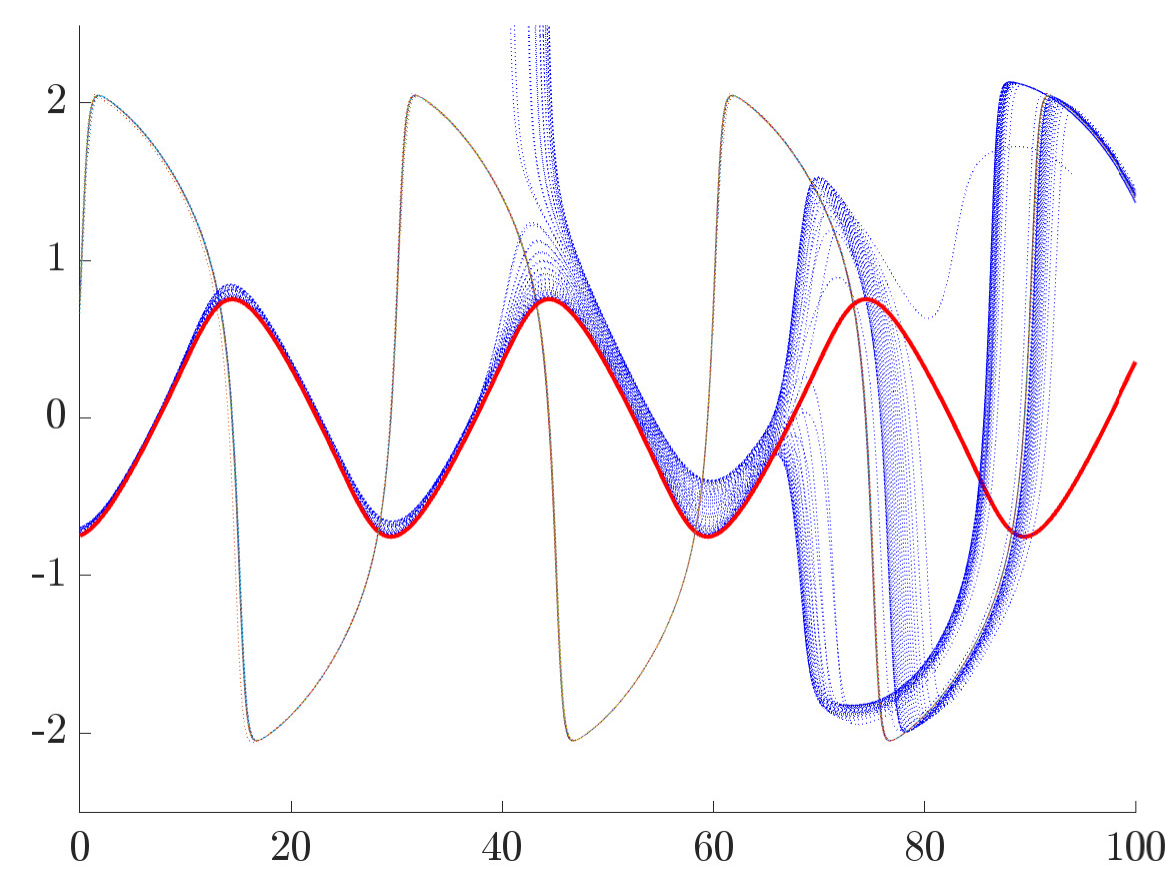} 
\put(51,-3){$t$}
\put(-1,39){$x$}
\end{overpic}
\begin{overpic}[width=0.33\textwidth]{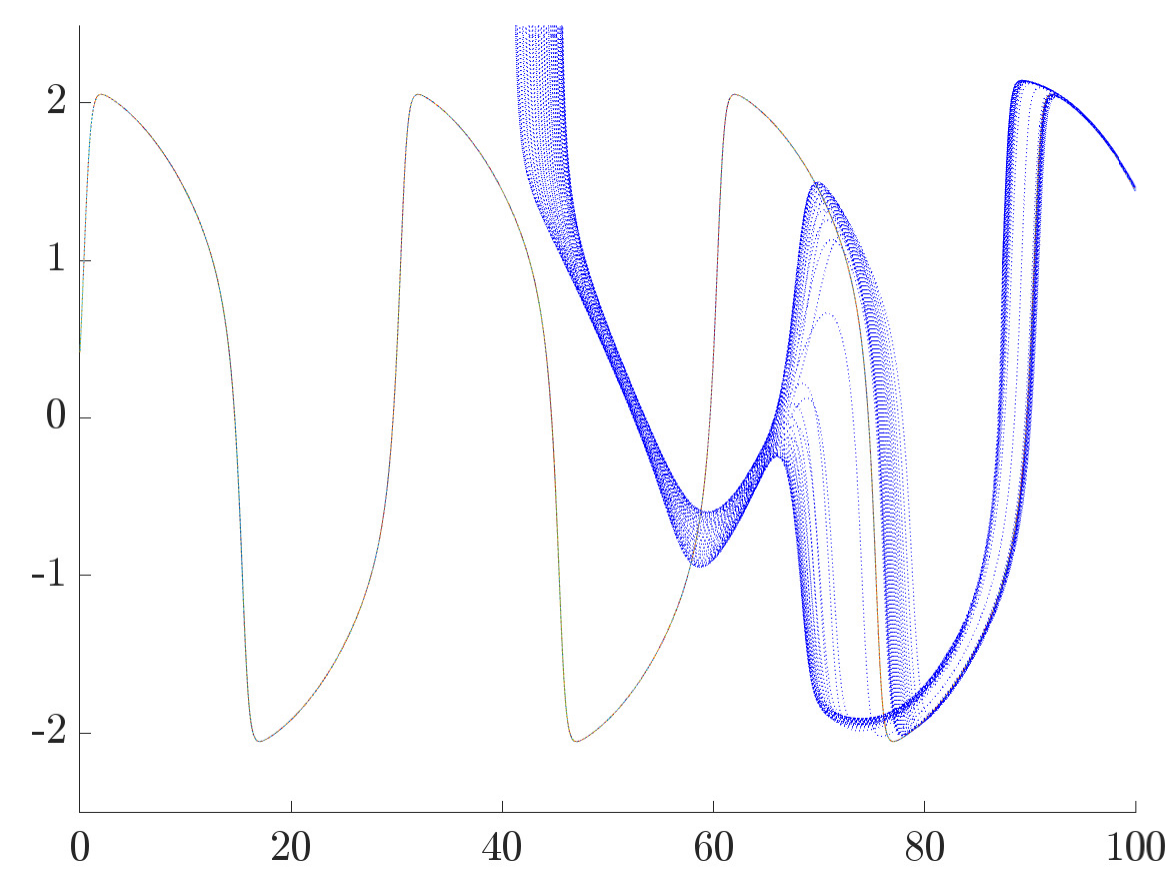} 
\put(51,-3){$t$}
\put(-1,39){$x$}
\end{overpic}
\caption{A phenomenology of the bifurcation through the appearance of unbounded solutions upon the variation of $\delta$. In this example, $v(t)=\cos(2\pi t/30)$, $a=0$, $b=0.4$, $\ep=0.1$. Five hundred solutions with initial conditions uniformly distributed in $[-200,50]$ are used to approximate the attracting invariant set of \eqref{eq:problem} (a transient of at least fifty time-steps is ignored and not plotted). The red trajectory represents the hyperbolic repelling solution of \eqref{eq:problem}. It is approximated numerically integrating backward in time twenty distinct initial conditions at time $t=20100$. The blue dotted lines are fifty solutions starting at time $t=66.7$ on the circle of radius $0.3$ centered at the origin and numerically integrated backward and forward in time. At $\delta=0.1$ these solutions are all bounded. At $\delta=0.6$, some of these solutions remain bounded whereas others blow up in finite time suggesting that the integral manifold does not exist anymore or it has been pierced. The hyperbolic repelling solution seems to persist, as the remaining bounded solutions converge to it in backward time. At $\delta=0.72$, all the considered solutions in blue blow up in finite time and the hyperbolic repelling solution does not seem to exist anymore.}
\label{fig:bif-periodic}
\end{figure*}

\begin{figure*}
    \centering
\begin{overpic}[width=0.33\textwidth]{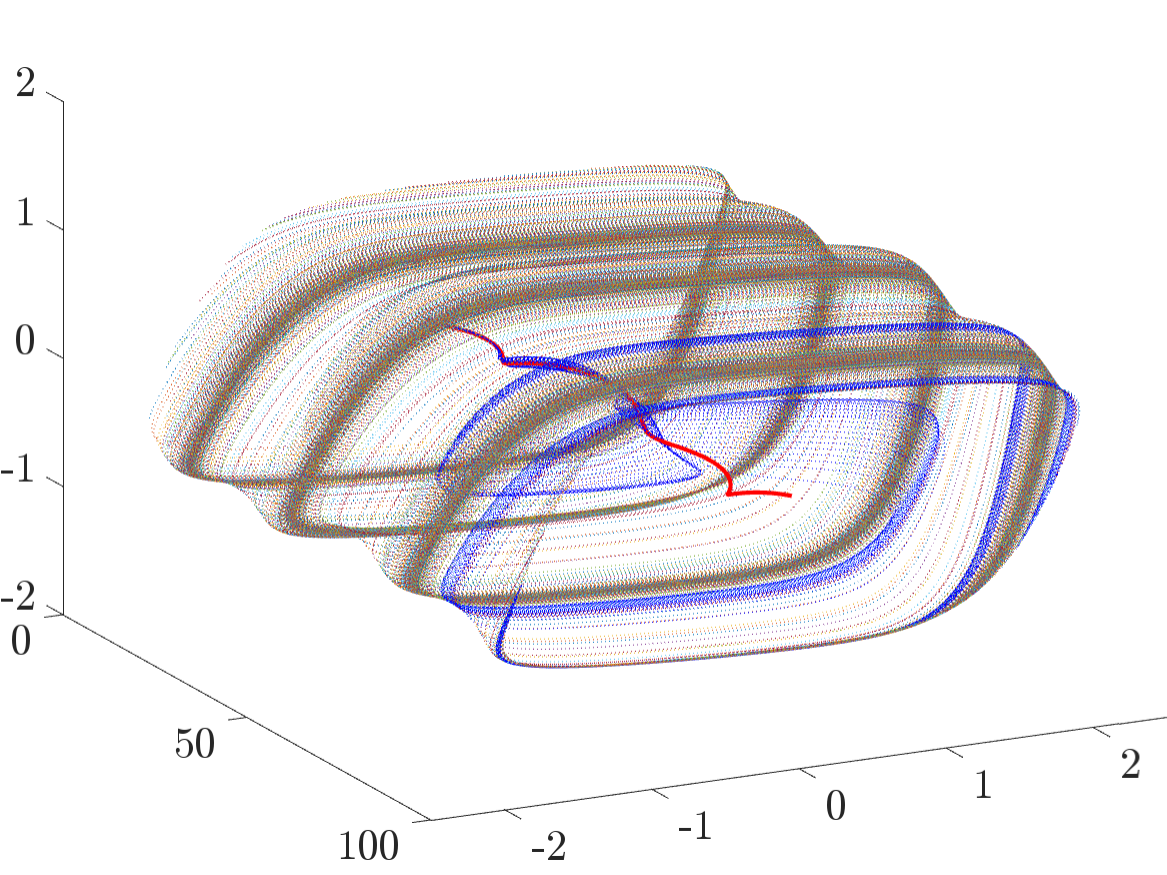} 
\put(42,72){$\delta=0.1$}
\put(15,6){$t$}
\put(70,1){$x$}
\put(-5,44){$y$}
\end{overpic}
\begin{overpic}[width=0.33\textwidth]{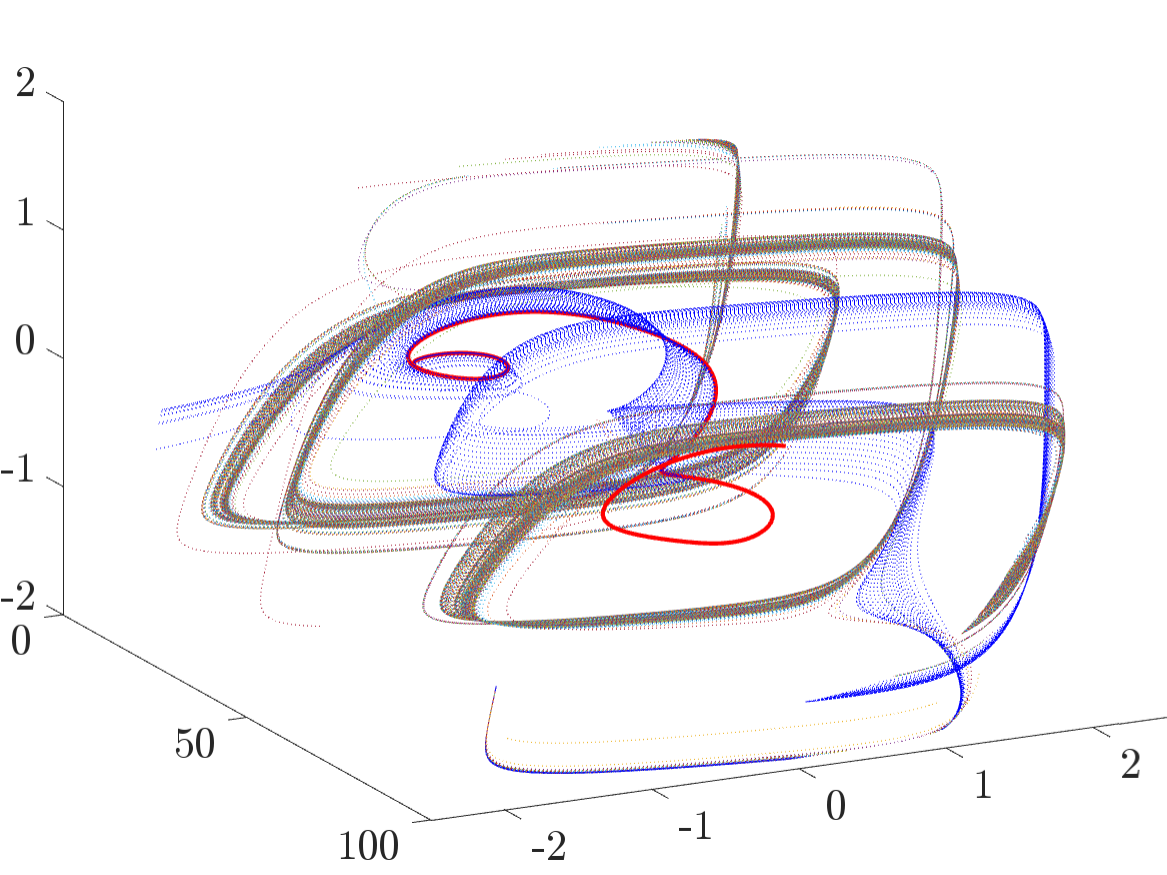} 
\put(42,72){$\delta=0.35$}
\put(15,6){$t$}
\put(70,1){$x$}
\put(-5,44){$y$}
\end{overpic}
\begin{overpic}[width=0.33\textwidth]{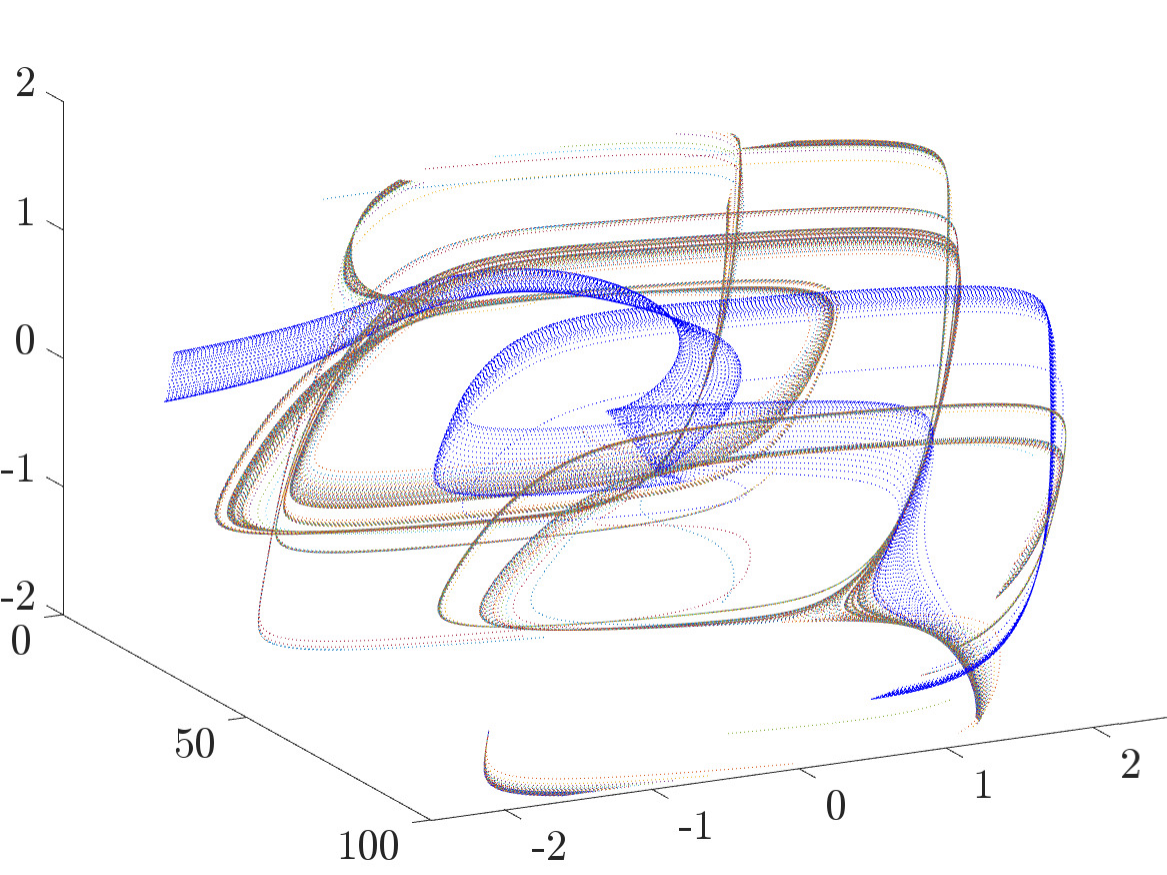} 
\put(42,72){$\delta=0.4$}
\put(15,6){$t$}
\put(70,1){$x$}
\put(-5,44){$y$}
\end{overpic}
\\[2ex]
\begin{overpic}[width=0.33\textwidth]{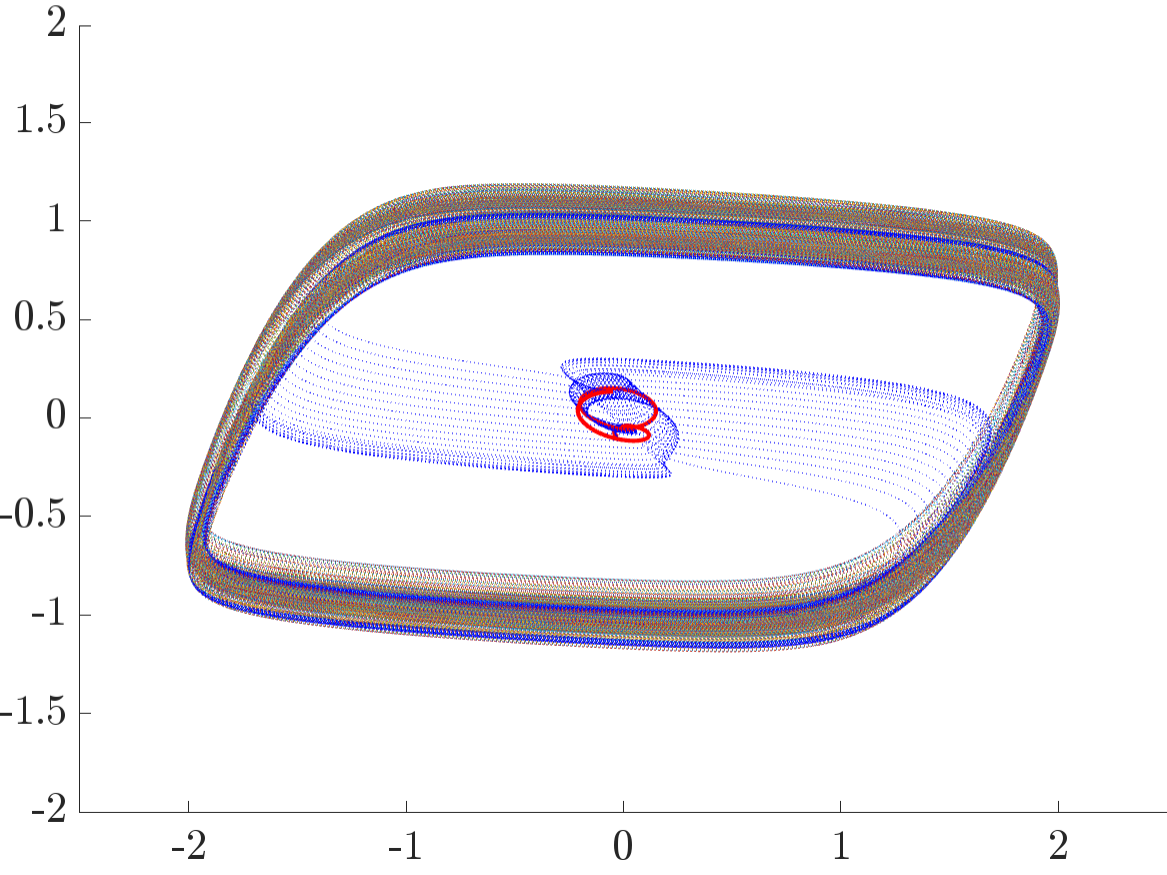} 
\put(51,-3){$x$}
\put(-1,39){$y$}
\end{overpic}
\begin{overpic}[width=0.33\textwidth]{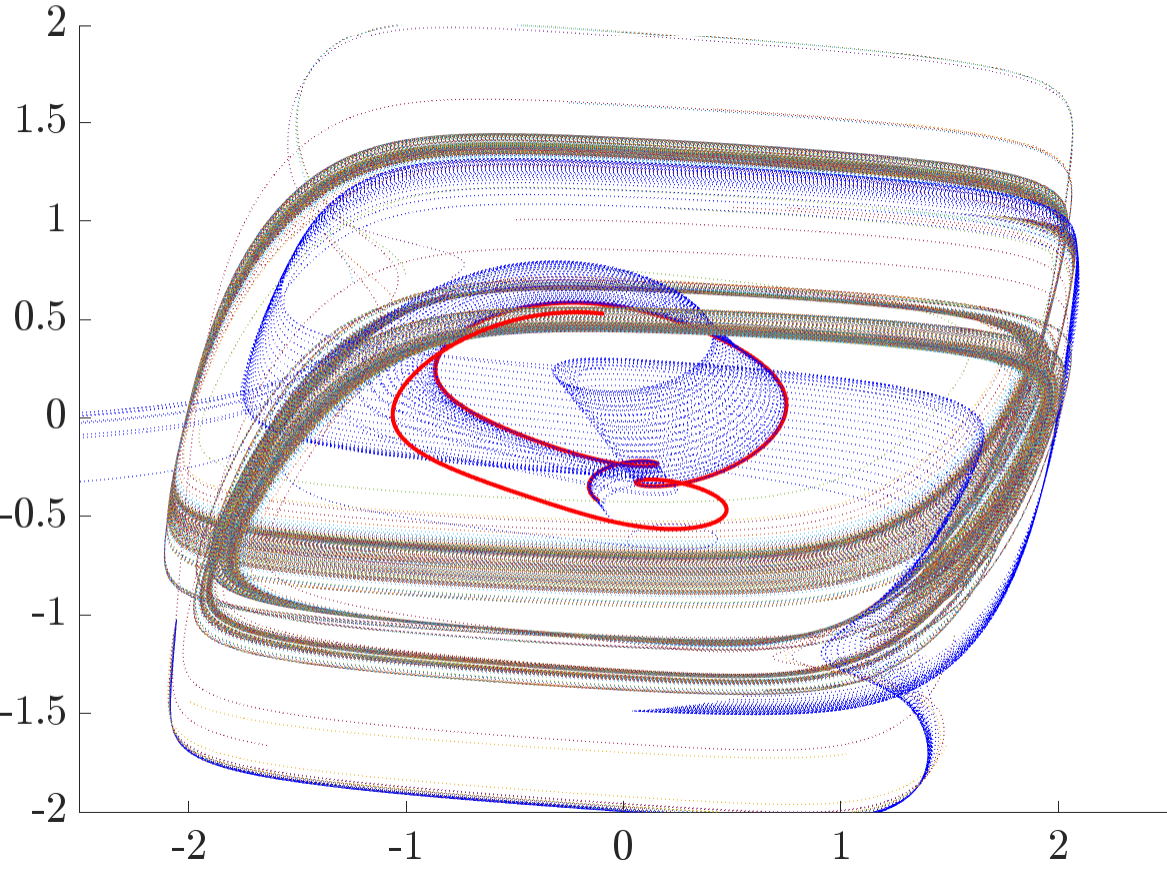} 
\put(51,-3){$x$}
\put(-1,39){$y$}
\end{overpic}
\begin{overpic}[width=0.33\textwidth]{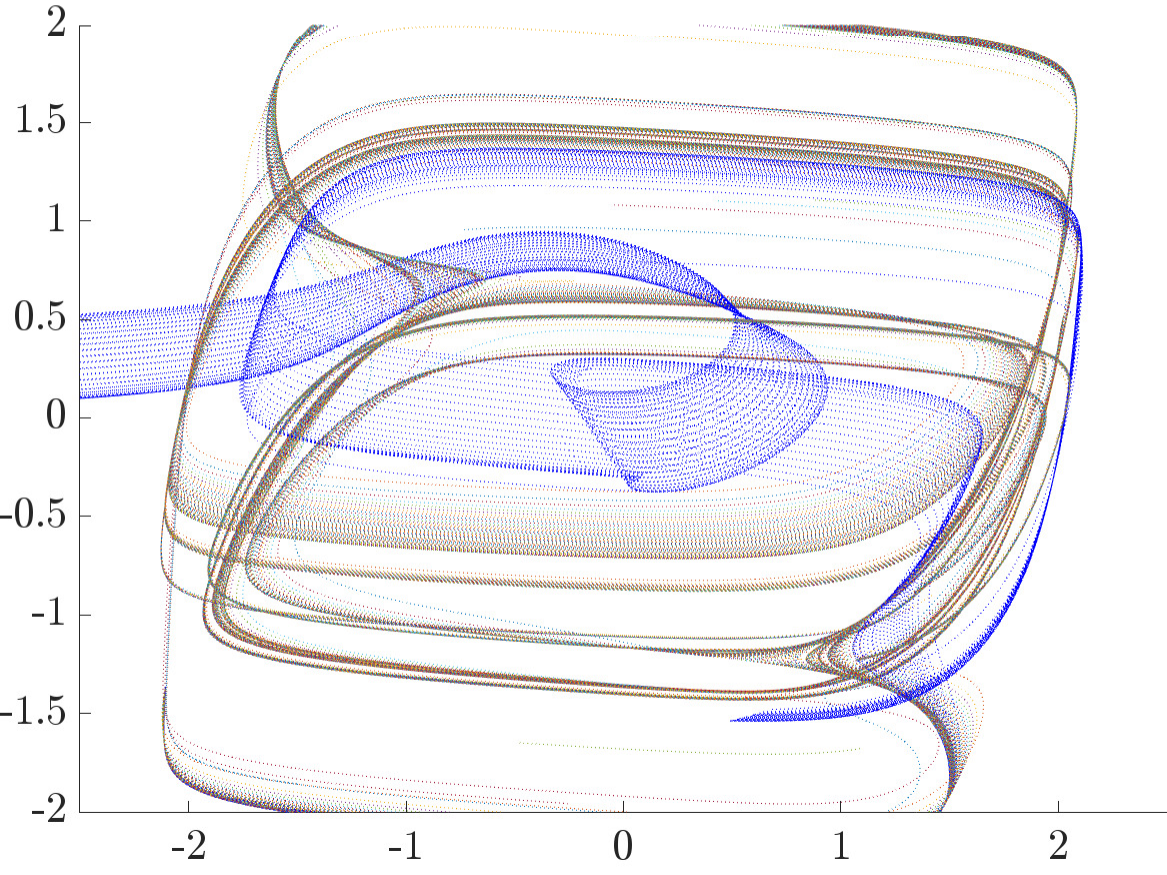} 
\put(51,-3){$x$}
\put(-1,39){$y$}
\end{overpic}
\\[2ex]
\begin{overpic}[width=0.33\textwidth]{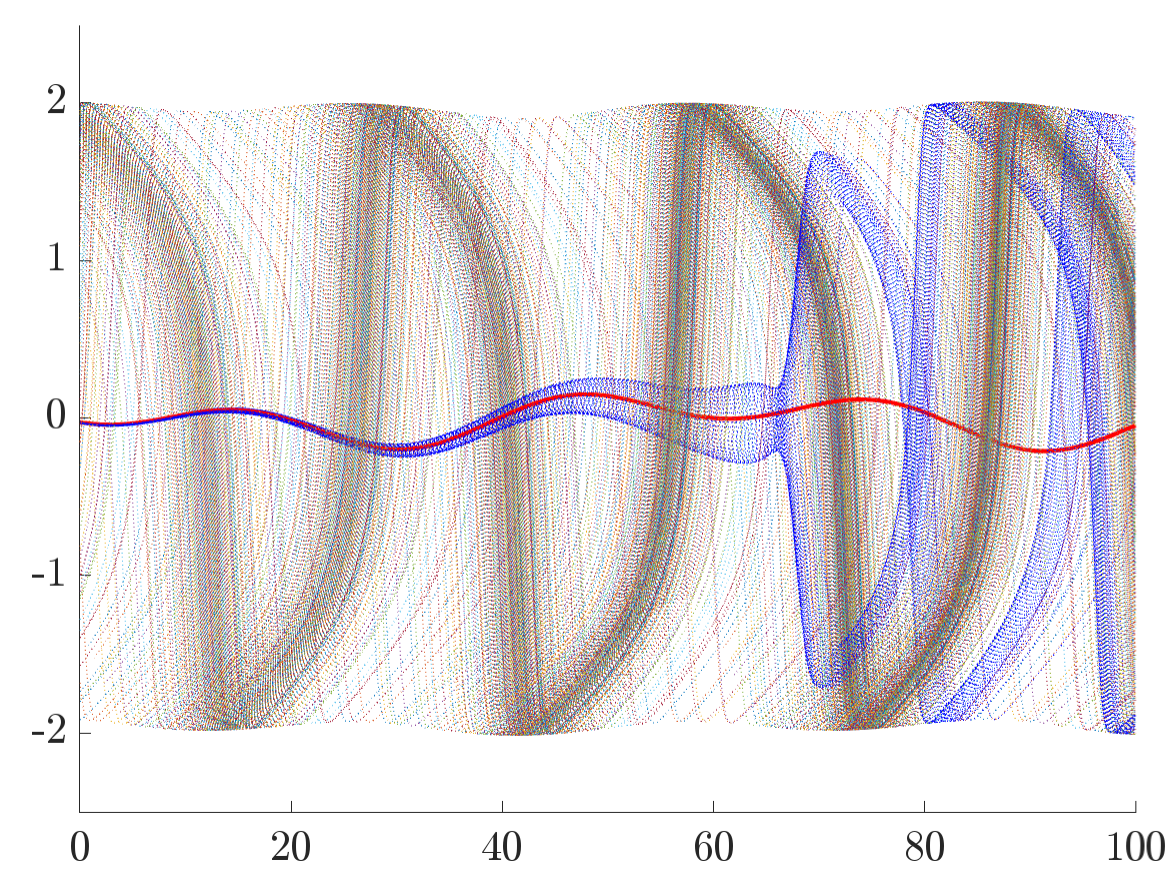} 
\put(51,-3){$t$}
\put(-1,39){$x$}
\end{overpic}
\begin{overpic}[width=0.33\textwidth]{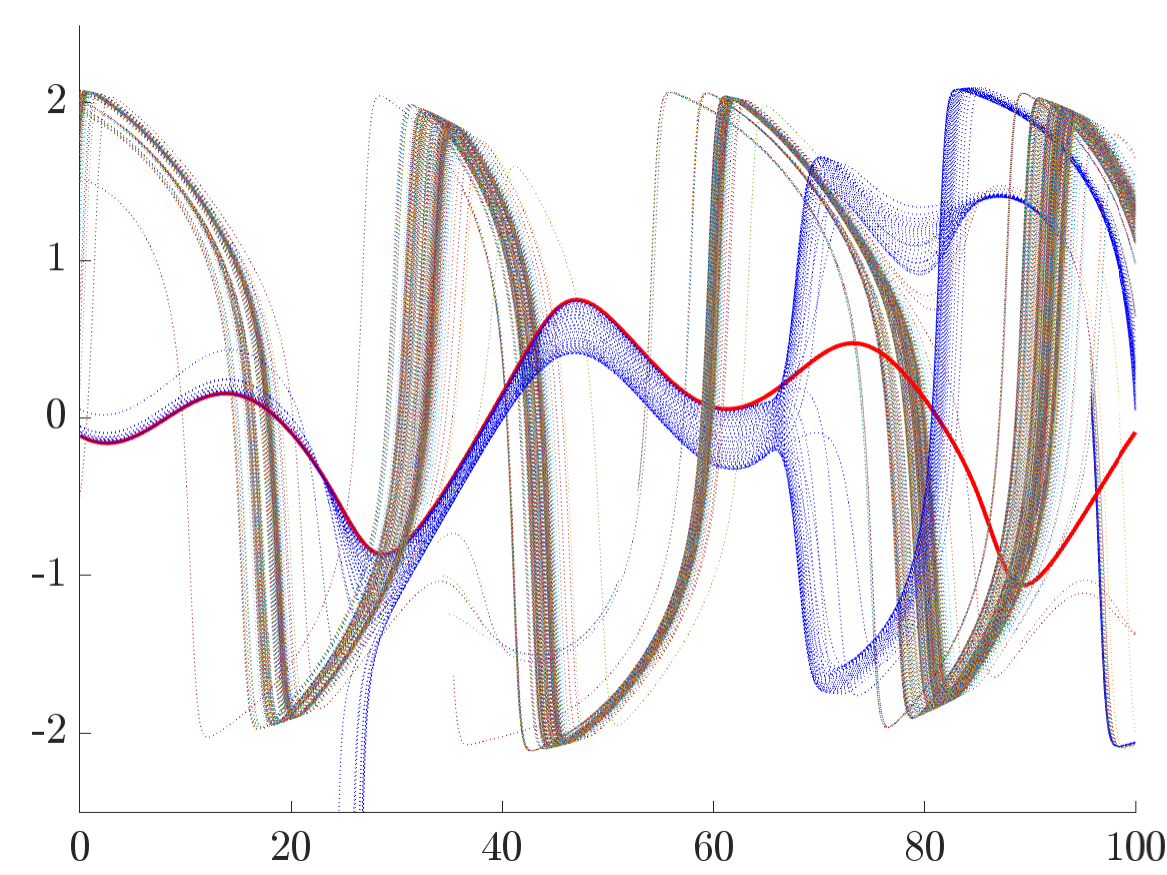} 
\put(51,-3){$t$}
\put(-1,39){$x$}
\end{overpic}
\begin{overpic}[width=0.33\textwidth]{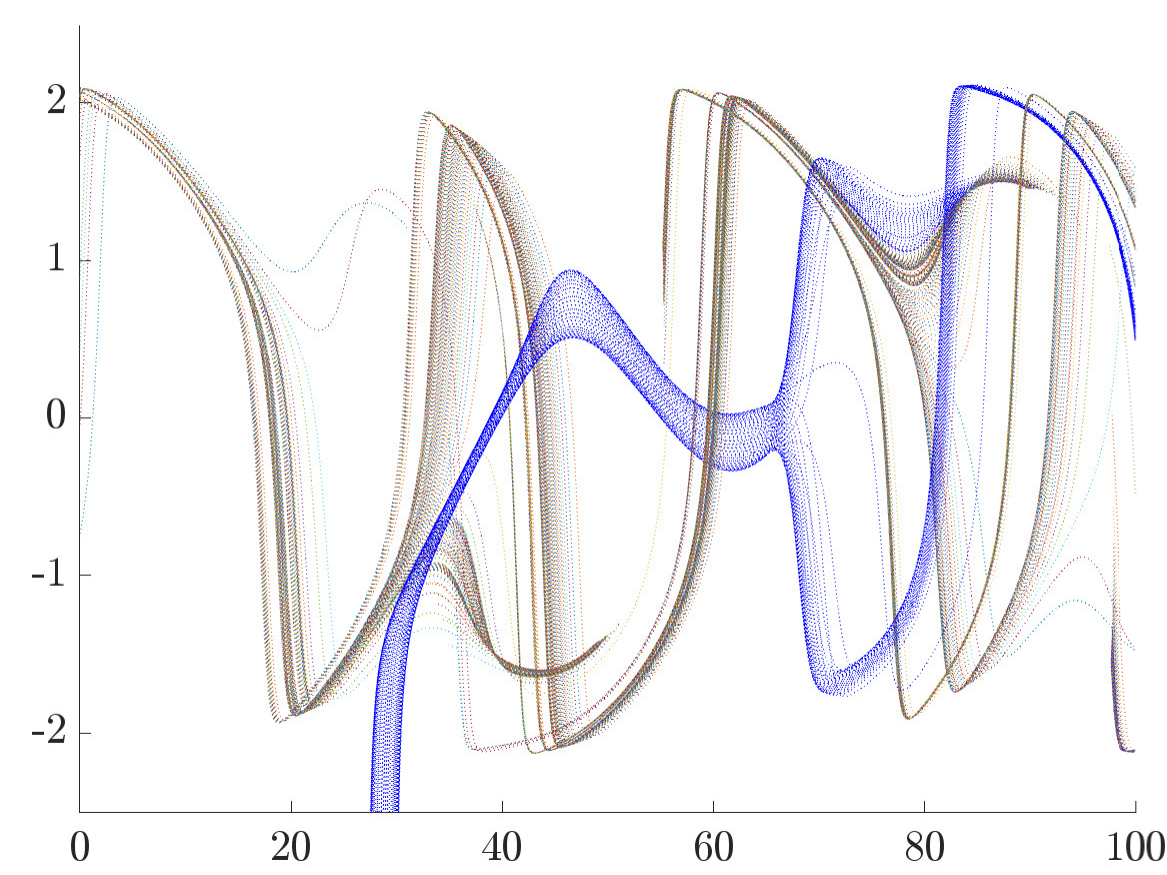} 
\put(51,-3){$t$}
\put(-1,39){$x$}
\end{overpic}
\caption{As for Figure \ref{fig:bif-periodic}, but for quasi-periodic forcing $v(t)=cos(2\pi t/30)+sin(2\pi t/(30\sqrt{5}))$ with  $a=0$, $b=0.4$, $\ep=0.1$. Five hundred solutions with initial conditions uniformly distributed in $[-200,50]$ are used to approximate the attracting invariant set of \eqref{eq:problem} (a transient of at least fifty time-steps is ignored and not plotted). The red trajectory represents the hyperbolic repelling solution of \eqref{eq:problem}. It is approximated numerically integrating backward in time twenty distinct initial conditions at time $t=20100$. The blue dotted lines are fifty solutions starting at time $t=66.7$ on the circle of radius $0.3$ centred at the origin and numerically integrated backward and forward in time. At $\delta=0.1$ these solutions are all bounded. At $\delta=0.35$, some of these solutions remain bounded whereas others blow up in finite time suggesting that the integral manifold does not exist anymore or it has been pierced. The hyperbolic repelling solution seems to persist, as the remaining bounded solutions converge to it in backward time. At $\delta=0.4$, all the considered solutions in blue blow up in finite time and the hyperbolic repelling solution does not seem to exist anymore.}
\label{fig:bif-quasi-periodic}
\end{figure*}

\section{A nonautonomous bifurcation} \label{sec:nonauton-bif}
In Section \ref{sec:global-attractor}, we have studied the nature of the global attractor of \eqref{eq:problem} for values of $\delta\in[0,1]$. In \ref{subsec:global-attractor-d=0} we have seen that, for values of $\delta$ sufficiently close to $\delta=0$, there exists a normally hyperbolic integral manifold enclosing the rest of bounded solutions, including a hyperbolic repelling solution for the problem. 
If $\delta$ takes values close to $\delta=1$, we showed in \ref{subsec:global-attract-d=1}  that there is a unique bounded solution and it is hyperbolic attracting and a copy of the base. 
For intermediate values of $\delta$ we have used a nonautonomous version of Tychonov's Theorem in \ref{subsec:global-attractor-int-d} to provide a local qualitative description of the global attractor that, although rigorous, it has practical limitation when a quantitative estimate is desired. 

It is clear that somewhere in the transition between the extreme values of $\delta$, the normally hyperbolic integral manifold breaks up, the hyperbolic completely unstable solution ceases to exist and a unique hyperbolic attracting solution arises. 
We can reasonably affirm that a bifurcation takes place. 

A natural candidate is a nonautonomous Hopf bifurcation pattern\cite{braaksma1987quasi,braaksma1990toward,johnson1994hopf,Franca2016nonautonomous,anagnostopoulou2015model,franca2019non,nunez2019li}.
The numerical evidence points at a two-step bifurcation\cite{arnold1995random,johnson2002two,anagnostopoulou2015model}. At first, the integral manifold collapses giving rise to a hyperbolic stable solution, and the hyperbolic repelling solution disappears only at a higher value of $\delta$. 
This is appreciable in Figures \ref{fig:bif-periodic} and \ref{fig:bif-quasi-periodic} for periodic and quasi-periodic forcing respectively. Note, in particular, that at small values of delta, initial conditions in a certain region of the phase space give rise to bounded solutions converging to the hyperbolic repelling trajectory in backward time. Upon varying the parameter some of these solutions start to blow up in finite time while others keep converging to the hyperbolic repelling solution in backward time; this fact points toward the conclusion that the integral manifold does not exists anymore. For even bigger values of the parameter, all the solutions cease to converge in backward time and rather blow up in finite time.

\begin{figure*}
    \centering
\begin{overpic}[width=0.33\textwidth]{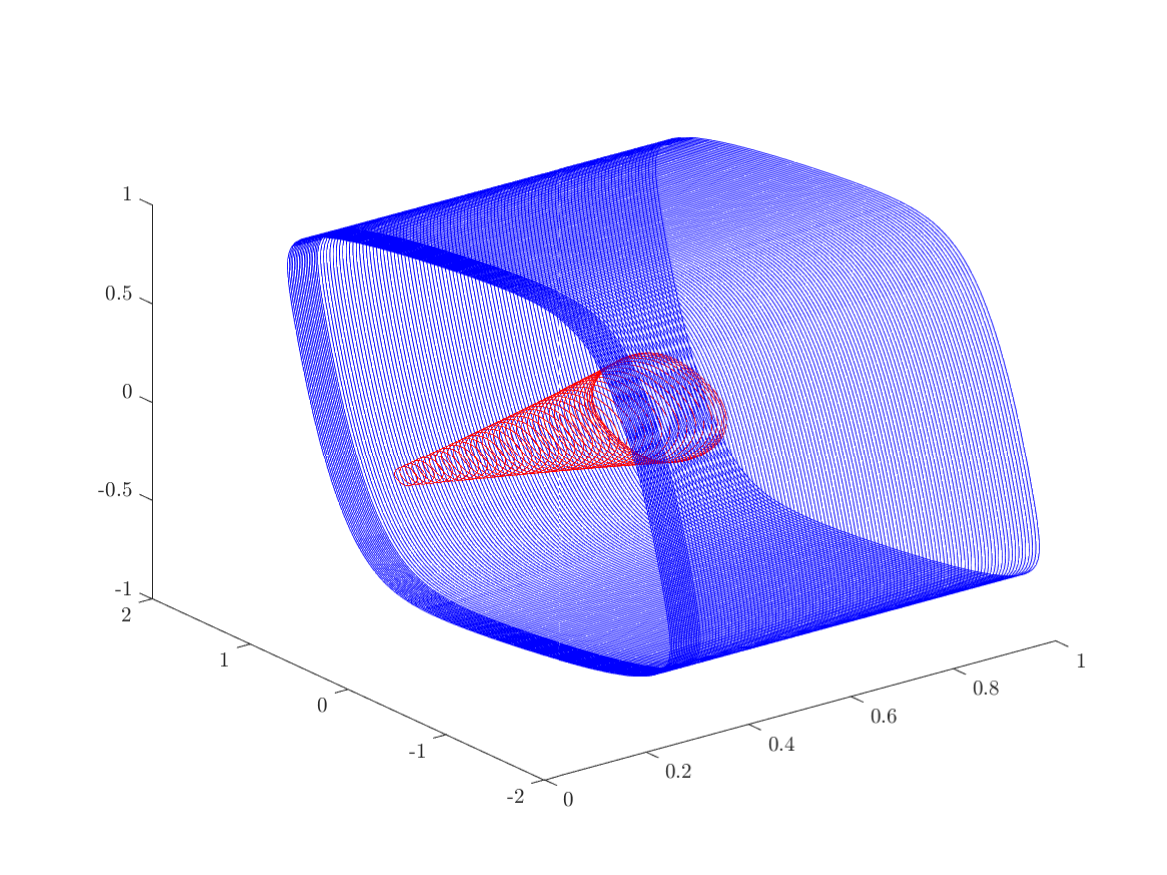} 
\put(20,0){$x$}
\put(80,0){$\delta$}
\put(-5,44){$y$}
\end{overpic}
\begin{overpic}[width=0.33\textwidth]{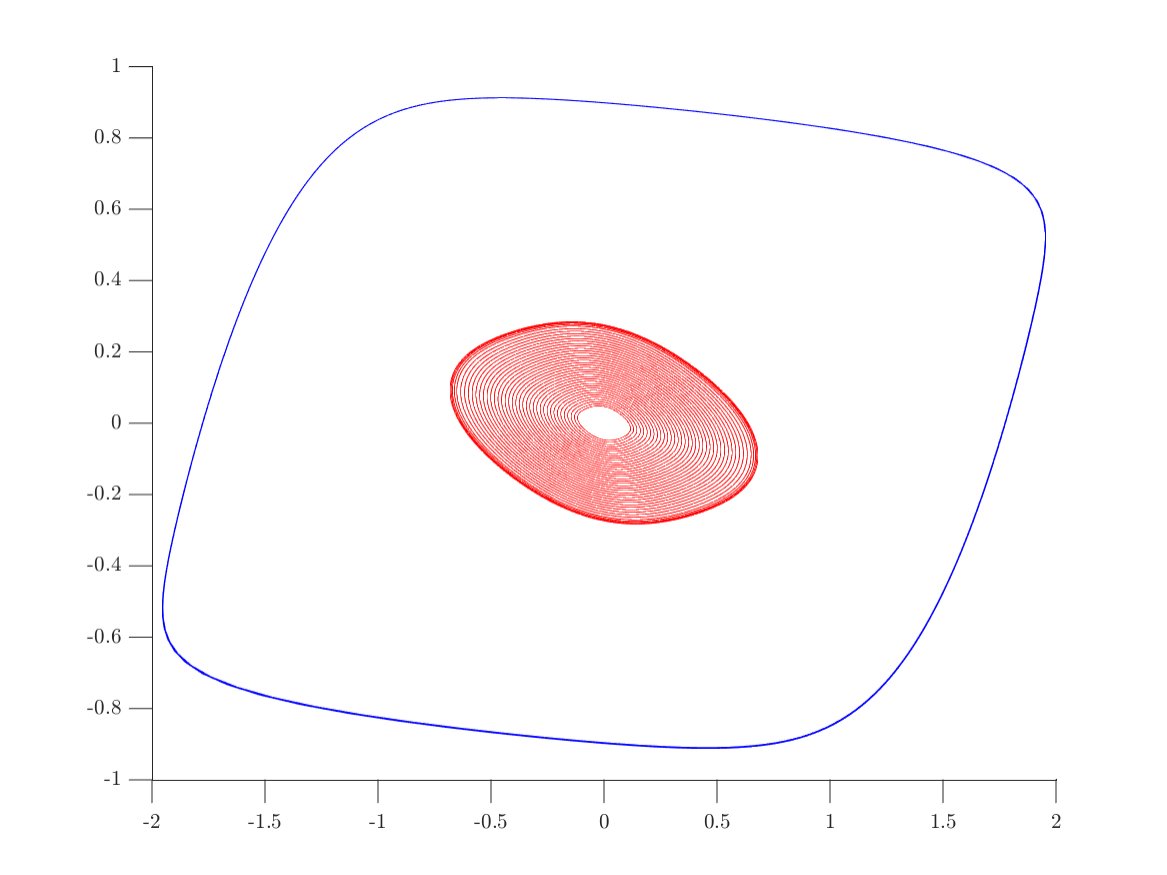} 
\put(51,-3){$x$}
\put(-2,39){$y$}
\end{overpic}
\begin{overpic}[width=0.33\textwidth]{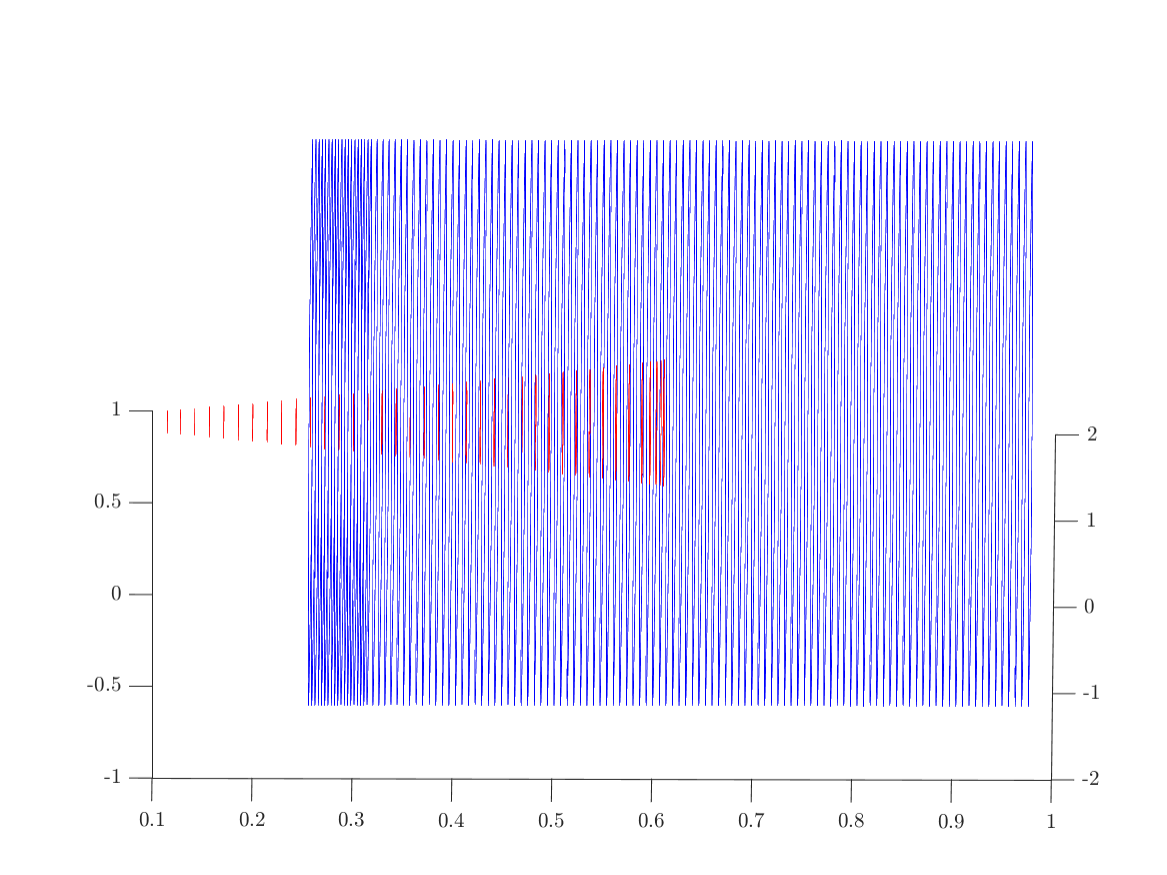} 
\put(50,-3){$\delta$}
\put(-3,40){$x$}
\end{overpic}
\caption{Continuation and numerical validation of hyperbolic periodic orbits of \eqref{eq:4D-FHN} with  $a=0$, $b=0.5$, $\ep=0.1$. The starting hyperbolic repelling solution is approximated numerically for $\delta=0.1$ and then continued forward. The hyperbolic attracting solution is approximated numerically for $\delta=0.9$ and then continued backward. The periodic orbits are here represented as projected on the $xy$-plane. It is possible to appreciate that the critical values of $\delta$ where they appear/disappear do not coincide.}
\label{fig:continuation-hyp-sols}
\end{figure*}

In order to acquire a better understanding of the phenomenon, we further explore the simpler case of periodic forcing. In particular, we  consider the special case where the $y$-component of the periodic attracting solution (with phase zero at time zero) of a twin FitzHugh-Nagumo system is fed into \eqref{eq:problem} as~$v$,
\begin{equation}\label{eq:4D-FHN}
\begin{split}
&\begin{sistema}
x' = (1-\delta)y+\delta v(t)-\frac{x^3}{3} +x\\
y' =\ep(a-x-by),\\
\end{sistema}\\
&\text{where } \big(u(t),v(t)\big)\text{ solves}\quad
\begin{sistema}
u' = v-\frac{u^3}{3} +u\\
v' =\ep(a-u-bv),
\end{sistema}\\
\end{split}
\end{equation}In this case, we are able to employ standard continuation techniques on the four dimensional autonomous problem \eqref{eq:4D-FHN}.

\subsection{Continuation of hyperbolic solutions}\label{subsec:continuation}

Let us first introduce some notation. Considering $\phi:\R\to \R$, $t\mapsto \phi(t)$, a periodic function of period $T$, we can write it in its Fourier expansion as 
$$
\phi(t) = \sum_{k=-\infty}^\infty c_ke^{\imag k \omega t}, \qquad c_k \in\mathbb{C}
$$
with $\omega = 2\pi T^{-1}$. Thanks to the periodicity in $t$, $c_k = \overline{c_{-k}}$. For a given $\nu$, we introduce the Banach space $\ell^1_\nu$ of complex sequences with the norm 
$$\| c \|_{\ell^1_\nu}:= \sum_{k=-\infty}^\infty |c_k|\nu^{|k|}.$$
If $\nu>1$, to have a bounded norm, the coefficients $|c_k|$ need to decrease geometrically away from $k = 0$. It is then numerically reasonable to consider the truncated Fourier sequence $ \hat c = [c_{-k},\dots, c_0, \dots, c_k]\in \mathbb{C}^{2k+1}$ and consider the embedding into the $\ell^1_\nu $ space. 

Let $y = (c_1, c_2, c_3, c_4) \in (\ell^1_\nu)^4$ be the Fourier representation of the solution to the ODE \eqref{eq:4D-FHN}, written as
\begin{equation}\label{eq:Fourier_problem}
\begin{split}
        &F(y) = \\ &=\begin{pmatrix}
        - \omega iKc_1 + c_2 - \frac{1}{3} c_1 \star c_1 \star c_1 + c_1
        \\ - \omega iKc_2 + \epsilon (a - c_1 - b c_2)
        \\
        -  \omega iKc_3 +(1-\delta) c_4 + \delta c_2 - \frac{1}{3} c_3 \star c_3 \star c_3 + c_3
        \\ - \omega iKc_4 + \epsilon (a - c_3 - b c_4)
    \end{pmatrix} = 0,
\end{split}
\end{equation}
with $(iKc)_k = i k c_k $ and $(c\star d)_k = \sum_{i+j=k} c_i d_j$. The latter is a finite sum if either $c$ or $d$ have finitely many elements.

Considering $\delta$ variable, the solution space for the Equation \eqref{eq:Fourier_problem} is $(\omega, \delta, y) \in \mathbb{R}\times\mathbb{R}\times (\ell^1_\nu)^4$, thus leaving us with two degrees of freedom, one associated with the parameter $\delta$, the other with rotations of periodic orbits, i.e. if $y$ is a solution so is $y(t+\tau)$ for any $\tau$. To fix this rotation, a new equation is introduced
$$
g_{\dot y _N}(y) = \langle y, \dot y_N \rangle + \beta = 0,
$$
where $\langle y, \dot y_N \rangle = \sum_{i=1}^4 \langle c_i, \dot c_i\rangle$ where $\langle c_i, \dot c_i\rangle = \sum_k (c_i)_k (\dot c_i)_k$ and $\dot y_N$ is a numerically computed kernel of the Frechet derivative of $F$. Then,  $\beta$ is defined as $\langle y_N, \dot y_N \rangle$ for $y_N$ a chosen numerical solution.

\begin{rmk}
Considering the 4-dimensional problem in Fourier space lifts the difficulties of time-matching the perturbing and the perturbed systems, i.e. $(u,v)$ and $(x,y)$.    
\end{rmk}

Once introduced the correct functional spaces and root-finding equations, the solution to $(F(\omega, \delta, y), g_{\dot y_N}(y)) = 0$ is, generically, a one-dimensional solution curve that we want to approximate by a sequence of points $\hat x_i$, starting from a given numerical approximation $\hat x_0 = (\omega_0, \delta_0, y_0)$. Let $v_1$ be a numerically approximated tangent to the solution curve, computed as the kernel of the Frechet derivative of $(F,g_{\dot y_N})$ at $\hat x_0$. Then an approximation for a new continuation point on the curve is $\tilde x _ 1 = \hat x _ 0 + \epsilon v_1$, where $\epsilon$ is an appropriately chosen step size. To refine this approximation, we apply a Newton algorithm of the zero-finding problem
$$
\begin{pmatrix}
    F(x), g(x), h(v,x)
\end{pmatrix}=0,
$$
where $h(v_1, x) = \langle v_1, x - \tilde x_1\rangle$ ensures not only unicity but also that the solution found by the Newton algorithm can be written as $ \tilde x _ 1 + p $, and the perturbation $p$ is perpendicular to the tangent, as presented in Figure \ref{fig:predictor_corrector}.

\begin{figure}
\includegraphics[width=0.35\textwidth, trim={3.5cm 5.5cm 5.5cm 0},clip]{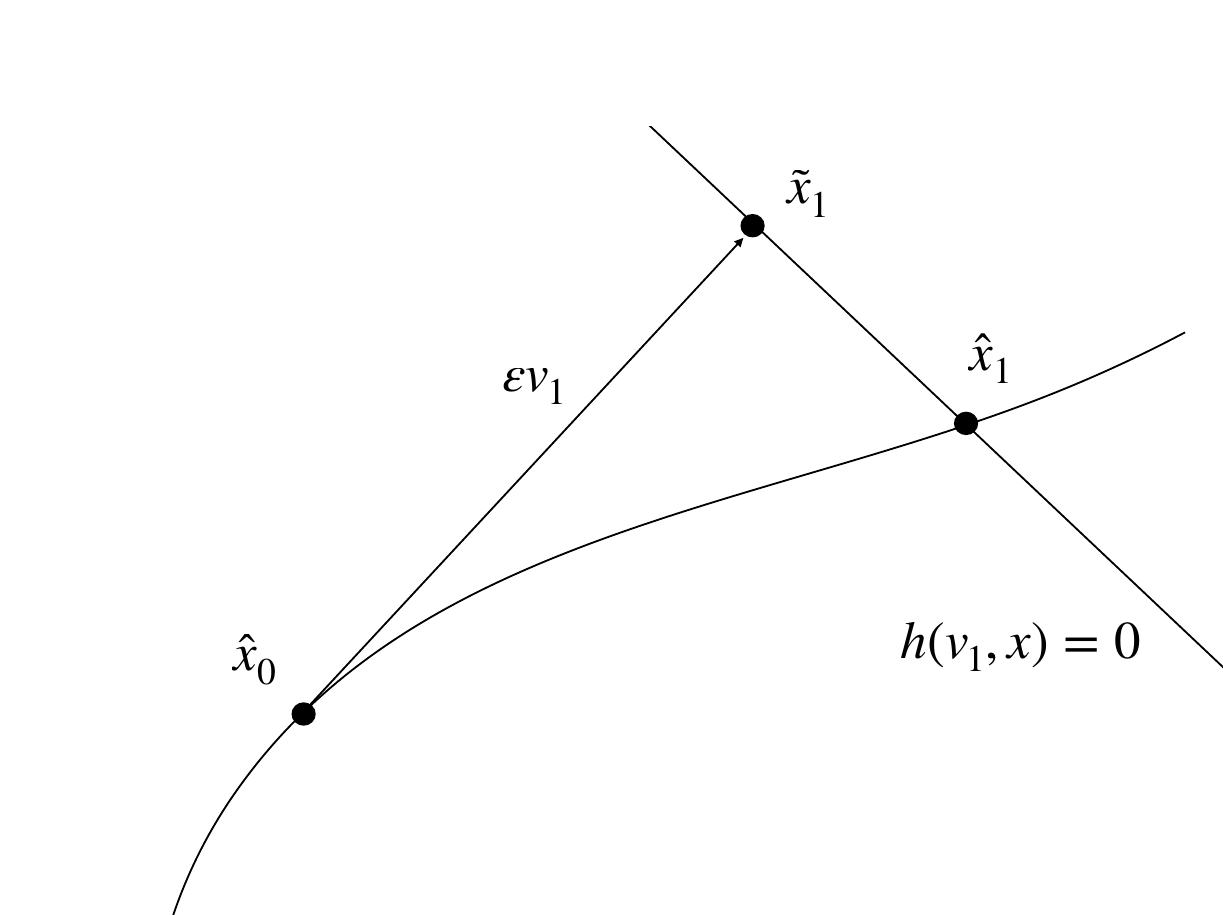}
\caption{A sketch of the predictor-corrector method.}
\label{fig:predictor_corrector}
\end{figure}

\begin{rmk}
The predictor-corrector algorithm is particularly well-suited for the continuation of hyperbolic solutions because it follows a branch independently of its stability.
\end{rmk}

Finally, the segment $x_s $ of analytical solutions connecting $x_0$ and $x_1$ and approximated by $x_0 + s(x_1 - x_0)$ solves the full zero-finding problem 
$$
G_s(x) = \begin{pmatrix}
    F(x)\\
    g_{\dot{y}_{Ns}}(x)\\
    h_s(x)
\end{pmatrix}
$$ 
where $s\in[0,1]$ and $h_s(x) = \langle v_s, x - \tilde x_s\rangle$ for coherently defined $v_s$ and $\tilde x_s$. 

Let $\hat x_0$ and $\hat x_1$ be two numerical solutions approximating $x_0$ and $x_1$. We here want to tackle the problem of proving the existence of an analytical branch of solutions in the neighborhood of the numerical approximations we computed. This is achieved here with the application of the \emph{radii polynomial approach}, based on the definition of a map $T_s(x) = x - A_s G_s(x)$, where $A_s$ is an approximation of the inverse of $DG_s(\hat x_s)$, the Frechet derivative of $G_s$. If $T_s$ is a contraction in a neighborhood of $\hat x_s$ for all $s\in [0,1]$, then by the Banach contraction theorem a unique fixed point $T_s(x_s) = x_s$ exists in that neighborhood. That is equivalent to saying that $A_sG_s(x_s) = 0$. If $A_s$ is non-singular, it then follows that $G_s$ has a unique solution $x_s$ in the same neighborhood. To make matters more precise, we sketch here the radii polynomial theorem\cite{radiipol1, radiipol2}.

\begin{thm}
    Let $T_s$ and $\hat x_s$ be as previously defined. Let us further define
    $$
    Y \geq \max_{s\in[0,1]} \| T_s(\hat x_s) - \hat x_s\|_{\sup}
    $$
    and
    $$
    Z(r) \geq \max_{s\in[0,1]}\max_{z,w \in B_1(0)} \| DT_s (\hat x_s + zr) wr \|_{\sup}
    $$
    where the norm is defined as
    $$\| x\|_{\sup} = \|(\omega, \delta, c_1, c_2, c_3, c_4)\| = \max\{ |\omega|, |\delta|, \max_{i = 1,2,3,4} \|c_i\|_{\ell^1_\nu}\}.$$
    Then if for a given $r^*$ it holds 
    $ Y + Z(r^*) < r^*$, $T_s$ is a contraction in the neighborhood $B_r(\hat x_s)$ for all $s$ and there exists a unique solution $x_s$ to $G_s(x_0) = 0$ in the same neighborhood.
\end{thm}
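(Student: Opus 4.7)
The plan is to deduce the statement from a parametric application of the Banach contraction mapping theorem. Concretely, for each fixed $s \in [0,1]$ I would (i) show that $T_s$ maps the closed ball $\overline{B_{r^*}(\hat x_s)}$ (in the $\|\cdot\|_{\sup}$ metric) into itself; (ii) show that $T_s$ is a strict contraction on this ball, with a ratio bounded uniformly in $s$; (iii) invoke Banach to produce a unique fixed point $x_s$ of $T_s$ in the ball; and finally (iv) translate the fixed-point equation $T_s(x_s) = x_s$ into the zero equation $G_s(x_s) = 0$ using non-singularity of $A_s$. The uniform bounds $Y$ and $Z(r^*)$ are precisely what transfers the estimates across the whole homotopy parameter $s$.

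For step (i), the triangle inequality gives, for $x \in \overline{B_{r^*}(\hat x_s)}$,
$$\| T_s(x) - \hat x_s \|_{\sup} \le \| T_s(\hat x_s) - \hat x_s \|_{\sup} + \| T_s(x) - T_s(\hat x_s) \|_{\sup}.$$
The first summand is bounded by $Y$ by definition. For the second, I would write $x = \hat x_s + r^* w$ with $\|w\|_{\sup}\le 1$ and apply the integral mean value theorem,
$$T_s(x) - T_s(\hat x_s) = \int_0^1 DT_s\bigl(\hat x_s + t r^* w\bigr)\,(r^* w)\, dt;$$
since $tw$ lies in the closed unit ball for every $t \in [0,1]$, the definition of $Z(r^*)$ bounds the integrand in $\sup$-norm by $Z(r^*)$, yielding $\|T_s(x) - \hat x_s\|_{\sup} \le Y + Z(r^*) < r^*$. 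Step (ii) follows from the same idea: pointwise on the ball one has $\|DT_s(\xi)\|_{\mathrm{op}} \le Z(r^*)/r^*$ for every $\xi \in \overline{B_{r^*}(\hat x_s)}$, and combining this with the mean value theorem along the segment between any two points $x,y$ of the ball (which stays in the ball by convexity) gives $\|T_s(x) - T_s(y)\|_{\sup} \le (Z(r^*)/r^*)\,\|x - y\|_{\sup}$; since $Y \ge 0$ and $Y + Z(r^*) < r^*$, the ratio $Z(r^*)/r^*$ is strictly less than one, uniformly in $s$. Step (iii) is then immediate.

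For (iv), the identity $T_s(x_s) = x_s$ reads $A_s G_s(x_s) = 0$, so once $A_s$ is injective one concludes $G_s(x_s) = 0$. The main obstacle is precisely this non-singularity of $A_s$ in the infinite-dimensional setting. In the standard radii polynomial strategy, $A_s$ is built as an explicit approximate inverse of $DG_s(\hat x_s)$ and one verifies, as part of the $Z(0)$ estimate, that $\|I - A_s DG_s(\hat x_s)\|_{\mathrm{op}} < 1$ on the full Banach space $\mathbb{R}\times\mathbb{R}\times(\ell^1_\nu)^4$, which then forces invertibility of $A_s$ via a Neumann series argument; the delicate point is controlling this operator norm on the infinite Fourier tail in the $\ell^1_\nu$ components. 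The remaining work of producing computable expressions for $Y$ and $Z(r)$ from the polynomial structure of $F$ reduces to the (nontrivial but essentially mechanical) bookkeeping of convolution estimates in the Banach algebra $\ell^1_\nu$.
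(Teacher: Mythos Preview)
The paper does not actually prove this theorem: it is only stated (introduced as ``we sketch here the radii polynomial theorem'') and immediately cited to the literature, with no argument given. Your proposal is therefore not being compared against a proof in the paper but against the standard argument in those references, and in that respect your outline is correct and is precisely the intended one: show self-mapping of $\overline{B_{r^*}(\hat x_s)}$ via $Y + Z(r^*) < r^*$, extract a uniform Lipschitz constant $Z(r^*)/r^* < 1$ from the derivative bound, apply Banach, and then pass from $A_s G_s(x_s)=0$ to $G_s(x_s)=0$ through injectivity of $A_s$. Your remark that the injectivity of $A_s$ is the genuine analytical content (typically obtained from $\|I - A_s DG_s(\hat x_s)\|_{\mathrm{op}}<1$, which is part of the $Z$-estimate) is exactly right and is the one point the bare statement in the paper glosses over.
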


For the implementation, the BiValVe library \cite{church2023computer} in combination with the Inlab library \cite{rump1999intlab} has been used to continue and validate the periodic orbits presented. The complete code for the validation can be found on Git \cite{gitCode}. Figure \ref{fig:continuation-hyp-sols} represented the validated orbit computed. Thanks to the validation, we proved that two locally unique branches of solutions exist, one connecting the blue orbits and one the red ones. The validation of the blue orbit starts at $\delta = 0.9$ fails when reaching $\delta = 0.2563$ and for the red orbit starts at $\delta = 0.1$ and fails at $\delta= 0.6101$. Both branches have error bounds lower than $r^* < 2.8\star 10^{-4}$. This result also proves the existence of a interval of parameters $\delta$ where both orbits coexist. 

In order to complete the continuation analysis carried out in \ref{subsec:continuation}, we investigate the behavior of the Floquet exponents of the variational equation along the continued hyperbolic solutions. Particularly, the Floquet exponents are determined by approximating the Lyapunov spectra.
We recall that for the periodic case, and more in general for the almost periodic case, Lyapunov Exponents are well-defined thanks to the Birkhoff Ergodic Theorem and the fact that the flow on the extended phase space $\Hu(v)\times\R^2$ is uniquely ergodic.
We implemented the algorithm described in Section 2 of Ref.~\onlinecite{breda2024practical}. This method uses the QR factorization of a sequence of fundamental matrix solutions of the variational equation along the periodic orbits.
The estimation of the Lyapunov Exponents is achieved by truncating the limit for the calculation after 10 periods of the periodic solution. 
The results of the numerical simulation are shown in Figure \ref{fig:Lyapunov_spectrum}, where the stability of the blue orbit and the instability of the red one are confirmed.  

\begin{figure}
\begin{overpic}[width=0.45\textwidth, trim={.5cm 7.5cm 1.5cm 8.5cm},clip]{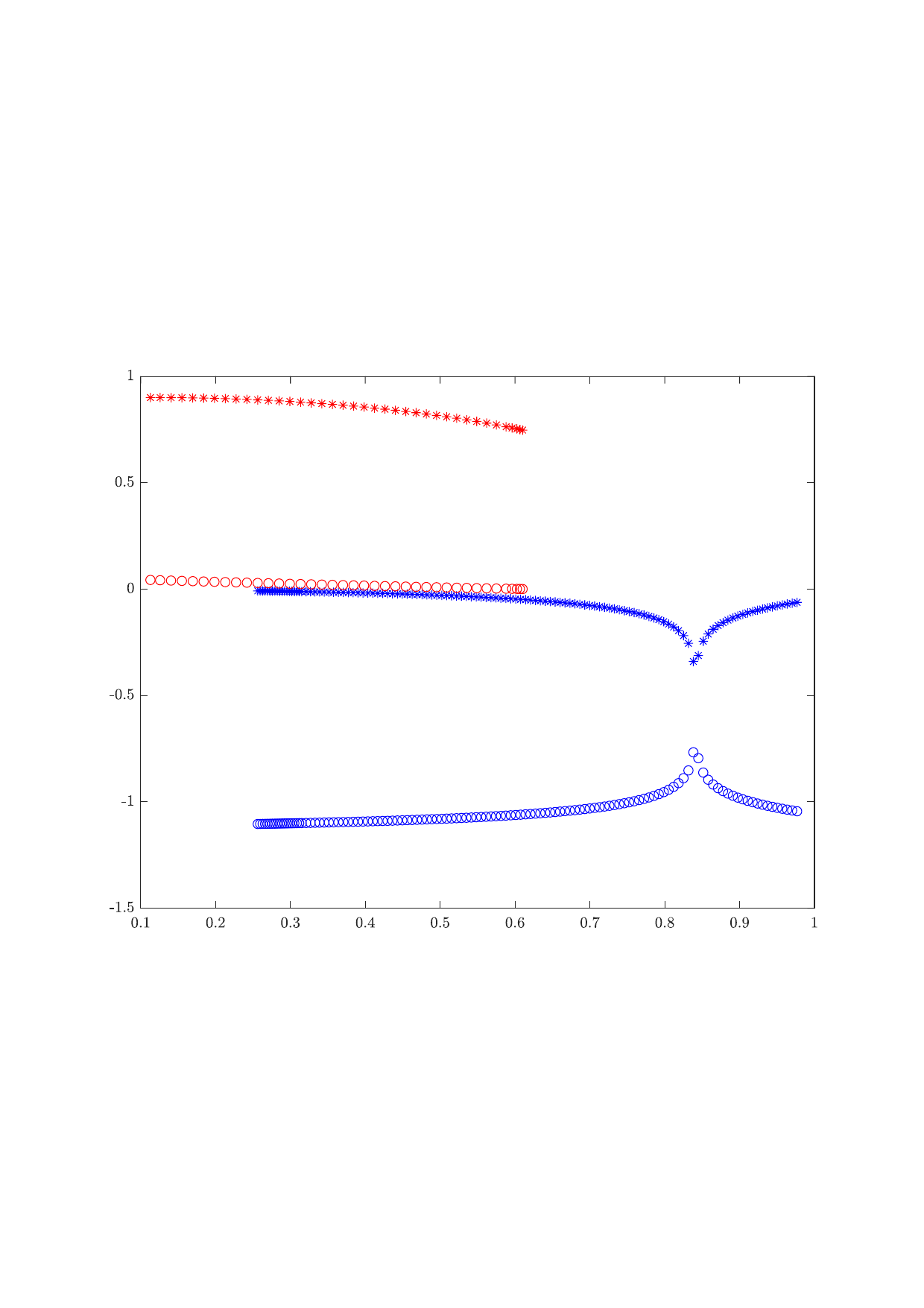} 
\put(50,0){$\delta$}
\put(5,30){\begin{sideways} spectrum \end{sideways}}
\end{overpic}
\caption{The Lyapunov spectra of the periodic solutions plotted w.r.t. $\delta$. In red the hyperbolically unstable solution, in blue the stable solution.}
\label{fig:Lyapunov_spectrum}
\end{figure}

It is also apparent that the two-step nature of the bifurcation cannot be resolved as a crossing of the zero for the spectrum at two different values of the parameters, since the simulation seems to suggest that the periodic hyperbolic orbits conserve their properties of stability till they cease to exist. In this sense, this phenomenon is different from the bifurcation discussed in Ref.~\onlinecite{johnson2002two}.

\section{Conclusions}
This work explores a natural and yet understudied framework: the parametric transition between autonomous and nonautonomous systems. We use the FitzHugh-Nagumo model as a paradigmatic example. This is a planar singularly perturbed system which has a periodic limit cycle and an unstable hyperbolic equilibrium in a suitable range of the parameter space. 
Moreover, the structure of the model allows to modify the equation for $\dot x$ by introducing a parametric linear interpolation between the $y$-component and a nonautonomous forcing $v(t)$ in a very simple fashion. An interesting case consists of taking $v$ as the $y$-component of a twin  FitzHugh-Nagumo model. In the extended phase space $\R\times\R^2$, the autonomous FitzHugh-Nagumo model has a global attractor with positive measure, whose boundary is an integral manifold. By its nature this attractor persists when $\delta>0$ is sufficiently small. We therefore obtain a nonautonomous system with an integral manifold enclosing the set of bounded solutions of the system, including a unique hyperbolic repelling solution.
On the other hand, if $\delta=1$ we obtain a nonautonomous skewed problem, made of a nonautonomous uncoupled cubic scalar equation and a linear inhomogenous scalar equation that can be explicitly solved once we have a solution to the first equation. Therefore, under suitable assumptions, we can analytically prove the existence of a unique globally attracting hyperbolic solution which persists for values of $\delta$ close to 1. 
This result is further supported by validated numerics techniques, allowing for a quantitative understanding of the solutions.
Hence, we analytically and numerically  studied the bifurcation phenomenon that leads to the collapse of the attracting integral manifold, the disappearance of the hyperbolic repelling solution and the birth of the unique hyperbolic globally attracting solution. 
The paper offers a rigorous technique to generate similar  bifurcation phenomena in nonautonomous problems derived by autonomous equations of Liénard type. 
A finer description of the bifurcation is however auspicable. Techniques from topological dynamics akin to the Diliberto map used in Ref.~\onlinecite{Franca2016nonautonomous} seem to be promising.
In this sense our work intends to stimulate the further investigation of this, similar and even more complicated scenarios using the transition between the autonomous and the nonautonomous realms as a basis. 
For example, we have hereby limited the presentation to a case where the FitzHugh-Nagumo model does not feature any particularly complicated dynamics. This is however not always the case as this differential problem is classically known to have several (autonomous) bifurcation points and a rather complicated unfolding. The case of higher dimensional systems and more complicated attractors also deserves attention. 

\section*{Acknowlegements}
The authors wish to deeply thank Courtney Quinn from the University of Tasmania for sharing her invaluable knowledge on the numerical approximation of Lyapunov Exponents. This allowed to rigorously complete the last part of Section \ref{sec:nonauton-bif}. \par\smallskip
IPL acknowledges partial support  by UKRI under the grant agreement EP/X027651/1, by MICIIN/FEDER project PID2021-125446NB-I00, by TUM International Graduate School of Science and Engineering (IGSSE) and by the University of Valladolid under project PIP-TCESC-2020.

EQ acknowledges full support by the DFG Walter Benjamin Programme QU 579/1-1.

CK acknowledges partial support  by a Lichtenberg Professorship of the VolkswagenStiftung and by the DFG Sachbeihilfe grant 444753754.

CK and IPL also acknowledge partial support of the EU within the TiPES project funded by the European Unions Horizon 2020 research and innovation programme under grant agreement No. 820970.

\section*{Author declarations} 

\subsection*{Conflict of Interest}
The authors have no conflicts to disclose.

\subsection*{Author Contributions}
All authors listed have made a substantial, direct, and intellectual contribution to the work and approved it for publication.

\section*{References}
\nocite{*}
\bibliography{References}

\end{document}